\theoremstyle{plain}
\newtheorem{thm}{Theorem}[section]
\newtheorem{prop}[thm]{Proposition}
\newtheorem{lem}[thm]{Lemma}
\theoremstyle{definition}
\newtheorem{defn}{Definition}
\theoremstyle{remark}
\newtheorem{remark}{Remark}
     \def\F{{\mathbb{F}}}              \def\TT{{\mathbb{T}}}      \def\Z{{\mathbb{Z}}}
    \def\cE{{\mathcal{E}}} \def\cF{{\mathcal{F}}}            \def\cR{{\mathcal{R}}}  \def\cT{{\mathcal{T}}}      
   \def\hD{{\widehat{D}}} \def\hE{{\widehat{E}}}               \def\hT{{\widehat{T}}}      
     \def\hcF{{\widehat{\mathcal{F}}}}
  \def\homega{{\widehat{\omega}}}
                   \def\tT{{\tilde{T}}}      
\newcommand{\G}{\Gamma}
\newcommand{\Ga}{\Gamma}
\newcommand{\La}{\Lambda}
\renewcommand\a{\alpha}
\renewcommand\b{\beta}
\renewcommand\k{\kappa}
\newcommand\s{\sigma}
\newcommand\Aut{\operatorname{Aut}}
\newcommand\Cyl{\operatorname{Cyl}}
\newcommand\past{\operatorname{past}}
\newcommand\Prob{\operatorname{Prob}}
\def\cc{{\curvearrowright}}
\newcommand{\resto}{\upharpoonright}
\begin{document}
\title{All properly ergodic Markov chains over a free group are orbit equivalent}
\author{Lewis Bowen\footnote{supported in part by NSF grant DMS-1500389, NSF CAREER Award DMS-0954606} \\ University of Texas at Austin}
\maketitle

\begin{abstract}
Previous work showed that all Bernoulli shifts over a free group are orbit-equivalent. This result is strengthened here by replacing Bernoulli shifts with the wider class of properly ergodic countable state Markov chains over a free group. A list of related open problems is provided.
\end{abstract}

\noindent
{\bf Keywords}: tree-indexed Markov chains, orbit equivalence, Bernoulli shifts  \\
{\bf MSC}:37A20\\

\noindent
\tableofcontents

\section{Introduction}

Consider countable groups $\G,\La$, standard probability spaces $(X,\mu), (Y,\nu)$ and probability-measure-preserving (pmp) actions
$$\G \cc (X,\mu), \quad \La \cc (Y,\nu).$$
These actions are {\bf orbit-equivalent} (OE) if there exists a measure-space isomorphism $\Phi:(X,\mu) \to (Y,\nu)$ such that $\Phi( \G x) = \La x$ for a.e. $x$ (so $\Phi$ takes orbits to orbits). More generally, these actions are {\bf stably orbit-equivalent} (SOE) if there exist positive measure sets $X' \subset X$, $Y' \subset Y$ and a measurable isomorphism $\Phi:X' \to Y'$  such that $\Phi_*(\mu \resto X')$ is a scalar multiple of $\nu \resto Y'$ and $\Phi( \G x \cap X') = \La \Phi(x) \cap Y'$ for a.e. $x$. 

Dye proved that any two essentially free ergodic pmp actions of the  integers are OE \cite{MR0131516, MR0158048}. More generally, if $\G, \La$ are countably infinite amenable groups, then any two essentially free ergodic pmp actions of $\G, \La$ are OE by a theorem of Ornstein-Weiss \cite{OW80} (see also \cite{MR662736} for the non-singular case). On the other hand, when $\G$ is non-amenable, then Epstein showed that there exist uncountably many pairwise non-OE essentially free ergodic pmp actions of $\G$ \cite{epstein-oe, MR2529949}. This followed the work of many authors on various special cases (see \cite{MR2135736, MR2138136, ioana-oe2} for example).

Here we are motivated by the problem of classifying a special class of actions, called Bernoulli shifts, up to OE. Given a standard probability space $(K,\k)$, let $K^\G$ be the set of all functions $x:\G \to K$. We denote such a function by $x=(x_g)_{g\in \G} \in K^\G$. Then $\G$ acts on $K^\G$ by shifting $(g\cdot x)_f := x_{fg}$. This action preserves the product measure $\k^\G$. The system $\G \cc (K,\k)^\G$ is called the {\bf Bernoulli shift over $\G$  with base space $(K,\k)$}. Bernoulli shifts play a central role in the classification theory of measure-preserving actions \cite{bowen-survey}. 

It is a consequence of Popa's cocycle-super-rigidity Theorems \cite{popa-malleable, MR2231962} and Kida's OE-rigidity Theorems \cite{MR2680399, MR2369194} together with sofic entropy theory \cite{bowen-jams-2010} that there are many groups $\Ga$ with the property that if two Bernoulli shifts over $\Ga$ are OE then their base spaces have the same Shannon entropy. This is explained in more detail in \cite{bowen-jams-2010}. For such groups there is a continuum of pairwise non-OE Bernoulli shifts. 

On the other hand, free groups appear to be remarkably flexible. I showed in \cite{MR2763777} that all Bernoulli shifts over a non-abelian free group $\F$ are OE. Moreover, Bernoulli shifts over non-abelian free groups of different finite rank are stably-orbit-equivalent \cite{MR2763778} (see also \cite{MR3108102} for a nice exposition and further results). 
The main result of this paper is:

\begin{thm}\label{thm:main}
Let $\F$ be a non-abelian free group of finite rank. Then all properly ergodic countable-state Markov chains over $\F$ are OE. In particular, they are all OE to a Bernoulli shift over $\F$.
\end{thm}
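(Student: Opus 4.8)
The plan is to reduce Theorem~\ref{thm:main} to the fact, established in \cite{MR2763777}, that all Bernoulli shifts over $\F$ are orbit equivalent. First note that a Bernoulli shift $(\mathbb{K},\kappa)^{\F}$ whose base has at least two points of positive mass is itself a properly ergodic countable-state Markov chain over $\F$: it is the chain with transition kernel $P(i,j)=\kappa(j)$ and stationary distribution $\kappa$, and $\kappa^{\F}$ is non-atomic since $\F$ is infinite. Hence it suffices to prove that an \emph{arbitrary} properly ergodic countable-state Markov chain over $\F$ is orbit equivalent to the Bernoulli shift with base $([0,1],\mathrm{Leb})$; transitivity of orbit equivalence together with \cite{MR2763777} then yields the theorem.

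The next step is to put the chain in a standard form. Regarding the chain as a nearest-neighbour Markov random field $\mu$ on $\mathbb{K}^{T}$, where $T$ is the Cayley tree of $\F$, one checks that, after discarding inessential states, proper ergodicity forces the chain to be irreducible, reversible, with everywhere-positive stationary distribution, and non-degenerate, in the sense that the exceptional configurations producing atoms in $\mu$ (for instance a transition kernel equal to the identity in some generating direction, or a two-state swap) are excluded. In particular the shift action on $(\mathbb{K}^{T},\mu)$ is essentially free --- a point that genuinely uses countability of the state space --- so its orbit relation is treeable of cost $\rank(\F)$, as it must be if it is to be orbit equivalent to a Bernoulli shift. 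The feature of proper ergodicity that gets used throughout is that there is a genuine amount of randomness both at, and across, every edge of $T$.

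The heart of the argument is to construct, along $\mu$-a.e.\ orbit, a \emph{new} free measurable $\F$-action with exactly the same orbits as the shift but whose law is a Bernoulli shift; equivalently, an orbit-preserving cocycle recoordinatizing the Cayley treeing of a.e.\ orbit into a ``Bernoulli'' treeing. The mechanism is the tree Markov property: conditioning on the state at a single vertex $v$ makes the branches of $T$ hanging off $v$ conditionally independent, so the correlations in $\mu$ are strictly nearest-neighbour and can be removed edge by edge. Concretely, I would (i) show that $\mu$ is orbit equivalent to its product with an auxiliary Bernoulli shift $([0,1]^{\F},\mathrm{Leb}^{\F})$ --- this is not a formality for non-amenable $\F$ and must be proved using the same flexibility of these actions that underlies \cite{MR2763777}; (ii) use the extra i.i.d.\ randomness, together with reversibility, to perform local ``resample-and-reroute'' moves that replace the correlated Markov coordinate on each edge by an independent one, organized as an increasing union of orbit-subequivalence relations exhausting the whole orbit relation; and (iii) identify the limit with the orbit relation of a Bernoulli shift. (An alternative route to run in parallel: after step (i), show that the system is orbit equivalent to an $\F$-action co-induced from a free ergodic $\Z$-action along a cyclic subgroup --- the decoupling in (ii) being exactly what makes the parallel lines of $T$ independent --- and then combine Dye's theorem \cite{MR0131516,MR0158048} with the behaviour of co-induction under orbit equivalence.)

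The main obstacle is step (ii). Over $\Z$ the analogue is free: a properly ergodic Markov chain over $\Z$ is a free ergodic $\Z$-action, hence orbit equivalent to a Bernoulli shift by Dye's theorem alone, with no use of the chain's structure. Over a non-abelian free group there is no $\F$-equivariant orientation, flow, or root on $T$, so there is no preferred direction in which to ``read off and recode'' the chain, and the decoupling must be carried out over all of $T$ at once while remaining measurable, preserving the measure class exactly, and neither merging nor splitting orbits. Organizing a countable sequence of such global moves, and showing that the resulting increasing union of subrelations is all of the orbit relation and is Bernoulli, is where the real work sits; reversibility, which is part of the definition of a Markov chain over $\F$, and the non-atomicity supplied by proper ergodicity are precisely what makes each individual move legitimate.
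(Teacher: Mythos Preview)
Your proposal is a strategy sketch rather than a proof, and the two places you yourself flag as ``not a formality'' and ``where the real work sits'' are genuine gaps that the paper closes in a quite different way. Step~(i), passing from $\mu$ to $\mu\times(\textrm{Bernoulli})$ within the OE class, is not established anywhere in your argument, and for non-amenable groups this is exactly the kind of statement that can fail; the paper never needs it. Step~(ii), the ``resample-and-reroute'' scheme organized as an increasing union of subrelations, is not defined concretely enough to check that it preserves the measure, keeps the orbits intact, and actually converges to something Bernoulli. Your alternative route via co-induction is closer in spirit to what the paper does, but a Markov chain over $\F$ is \emph{not} in general co-induced from a $\Z$-action, so that reduction, as stated, does not go through.

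The paper's mechanism is more structural and avoids both obstacles. The key observation (Lemma~\ref{lem:helper0}) is that a Markov measure $\mu\in\Prob_\F(A^\F)$ is completely determined by its symbolic restrictions $\mu_s=R^s_*\mu\in\Prob_\Z(A^\Z)$ to the cyclic subgroups $\langle s\rangle$, $s\in S$, together with the Markov property along $|S|-1$ of the generators; moreover, one can freely \emph{replace} a single $\mu_t$ by any shift-invariant $\nu$ with the same one-dimensional marginal and still get a legitimate shift-invariant measure on $A^\F$. The proof then has two independent pieces. First (Proposition~\ref{prop:reduction}, which your outline omits entirely), explicit ``edge-sliding'' cocycles are used to pass from an arbitrary properly ergodic Markov chain to one whose restriction $\mu_s$ is ergodic and essentially free for \emph{every} $s\in S$; this step is needed before Dye's theorem can be invoked along each generator. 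Second (Lemma~\ref{lem:main}), for such a chain one builds, for each generator $t$ in turn, a new $\F$-action with the same orbits by keeping the $\cdot$-action of every $s\ne t$ and replacing the action of $t$ by $t\ast x=t^{\beta(1,R^tx)}\cdot x$, where $\beta$ is the Dye rewiring cocycle carrying $(A^\Z,\mu_t)$ to $(A^\Z,(\pi_*\mu)^\Z)$. One checks directly from the cocycle calculus that the resulting $\Omega_*\mu$ is again $s$-Markov for $s\ne t$ and has the prescribed restrictions, hence equals the measure produced by Lemma~\ref{lem:helper0}. After $|S|$ such replacements one lands on $(\pi_*\mu)^\F$, and \cite{MR2763777} finishes. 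No auxiliary Bernoulli factor, no limiting procedure, and no co-induction are required; the Markov property is what lets the generator directions be decoupled and rewired one at a time.
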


\begin{remark}
An action is {\bf properly ergodic} if it is ergodic and there does not exist a co-null set on which the group acts transitively. Markov chains over free groups are carefully defined in \S \ref{sec:mc1}. 
\end{remark}

\begin{remark}
The space of measure-preserving actions of $\F$ on a standard probability space $(X,\mu)$ is denoted by $A(\F,X,\mu)$. It admits a natural Polish topology called the weak topology. Using \cite[Lemma 9.4]{bowen-entropy-2010a} it can be shown that the subset $\rm{MC} \subset A(\F,X,\mu)$ of actions that are measurably conjugate to a properly ergodic Markov chain is dense. More generally, in \cite{MR3420545} I showed that the OE-class of any essentially free action of $\F$ is weakly dense in the space of actions.
\end{remark}

\subsection{Questions and comments}

\begin{enumerate}
\item Is there a nice characterization of the measured equivalence relations that are SOE to a Bernoulli shift over $\F$? Such a characterization should help determine which of the following actions are SOE to a Bernoulli shift over $\F$: unimodular Galton-Watson trees \cite[Example 1.1]{aldous-lyons-unimodular}, Bernoulli shifts over surface groups or other treeable groups, Poisson point processes in the hyperbolic plane conditioned on the origin being contained in the point process, actions of the form $\G \cc \Aut(T_d)/\La$ where $\G,\La$ are lattices in $\Aut(T_d)$ and $\Aut(T_d)$ is the automorphism group of the $d>2$ regular tree, actions of $\F$ with completely positive Rokhlin entropy (for the definition of this see \cite{bowen-survey}), (orbit) factors of Bernoulli shifts, non-weakly-compact Gaussian actions of free groups, non-hyperfinite ergodic subequivalence relations of Bernoulli shifts over $\F$, inverse limits of Bernoulli shifts, Markovian planar stochastic hyperbolic infinite triangulations as in \cite{MR3520011}, the free spanning forest FSF of the Cayley graph of a surface group \cite{MR1825141}, and the cluster relation of Bernoulli percolation in the non-uniqueness phase of non-amenable Cayley graph \cite{MR1423907}.

\item  Weak compactness of actions was defined in \cite{MR2680430}. It is an SOE invariant and was shown in \cite{bowen-survey} to imply zero Rokhlin entropy. Therefore no weakly compact ergodic action of any group can be SOE to a Bernoulli shift over $\F$. For example, if $\F$ is embedded densely into a compact group $K$ then the translation action $\F \cc K$ is compact and therefore, weakly compact. 

\item Rigid actions were defined in \cite{popa-annals-2006} (see also \cite{ioana-sl2z} for an ergodic-theoretic formulation). From the proof of \cite[Proposition 3.3]{MR2568879}), it follows that no Bernoulli action admits a rigid orbit-factor. For example, the usual action of $\rm{SL}(2,\Z)$ on the 2-torus is rigid \cite{popa-annals-2006}. So the direct product of $\rm{SL}(2,\Z) \cc \TT^2$ with a Bernoulli action of $SL(2,\Z)$ cannot be OE to a Bernoulli shift (over any countable group). 

\item More generally, if $\F \cc (X,\mu)$ is OE to a Bernoulli shift over $\F$ then the group measure-space construction $L^\infty(X,\mu) \rtimes \F$ has Haagerup's property. This rules out the previous examples. It also rules out the following: suppose $\G$ is a group with property (T) and $\F \to \G$ is a surjection. Consider a generalized Bernoulli shift $\F \cc (K,\k)^\G$. The direct product of this action with a Bernoulli shift action of $\F$ is essentially free and ergodic but the group measure-space construction does not have the Haagerup property \cite[Theorem 0.2]{MR2833560}. So it cannot be OE to a Bernoulli shift over $\F$. (Thanks for Adrian Ioana for explaining the last three examples to me.)

\item There exist properly ergodic Markov chains whose Koopman representation is not contained in the countable sum of left-regular representations. There are also properly ergodic  Markov chains with zero Rokhlin entropy and other chains with negative $f$-invariant \cite{bowen-survey}. Since Bernoulli shifts do not have these properties, they cannot be OE invariants.

\item Bernoulli actions are {\bf solidly ergodic} in the sense that every subequivalence relation of the orbit relation decomposes into a hyperfinite piece and at most countably many strongly ergodic pieces \cite{MR2647134}. Because this property is an OE invariant, Theorem \ref{thm:main} implies properly ergodic Markov chains over $\F$ are solidly ergodic.


\item If $\F \cc (X,\mu)$ is solidly ergodic, essentially free, has no weakly compact orbit factors and its group measure space construction is Haagerup then is it OE to a Bernoulli shift?

\end{enumerate}

\subsection{Remarks on the proof}
There are two main parts: first we show that any properly ergodic Markov chain is OE to a Markov chain that is ``generator-ergodic'' in the sense that its symbolic restriction to any generator subgroup  is ergodic and essentially free (Proposition \ref{prop:reduction}). Second we show that every generator-ergodic Markov chain  is OE to a Bernoulli shift. 

The first step is by explicit construction, involving some ``edge-sliding'' arguments. In fact, the orbit-equivalences are continuous. The second step uses Dye's Theorem for actions of $\Z$ (as a black box) and so is considerably less constructive.

{\bf Acknowledgements}. Thanks to Brandon Seward, Robin Tucker-Drob and Peter Burton for discussing this problem with me. The picture greatly clarified from these discussions. Also thanks to Adrian Ioana for pointing me to \cite{MR2568879} and its implications.

\section{Preliminaries}

\subsection{Notation}
Let $S$ be a finite set and $\F=\langle S\rangle$ be the free group generated by $S$.  Let $A$ be a finite or countable set called the {\bf alphabet}. Then $A^\F$ is the set of all functions from $\F$ to $A$. We denote such a function by $x=(x_g)_{g\in \F}$. Let $\F$ act on $A^\F$ by
$$(g\cdot x)_f = x_{fg}.$$
This is called the {\bf shift action} and is denoted by $\F \cc A^\F$. Let $\Prob_\F(A^\F)$ denote the set of all shift-invariant Borel probability measures on $A^\F$. 

Similarly, define
$$T:A^\Z \to A^\Z, \quad (Tx)_n = x_{n+1}$$
and let $\Prob_\Z(A^\Z)$ denote the set of all $T$-invariant Borel probability measures on $A^\Z$.

If $(X,\mu)$ is a measure space, $Y$ is a Borel space and $\phi:X \to Y$ is measurable then the {\bf pushforward measure} $\phi_*\mu$ on $Y$ is defined by $\phi_*\mu(E) = \mu(\phi^{-1}(E))$ for measurable $E \subset Y$.

For $g \in \F$, let $\pi_g(x)=x_g$. Similarly, for $n\in \Z$ and $x\in A^\Z$, let $\pi_n(x)=x_n$. We will frequently abuse notation by writing $\pi$ for either $\pi_e$ or $\pi_0$ (depending on whether the argument is in $A^\F$ or $A^\Z$). 

\subsection{Markov Chains}\label{sec:mc1}

Here we define Markov chains over free groups. Let $|\cdot|$ denote the word length on $\F$. So for $g\in \F$, $|g|$ is the smallest integer $n$ such that $g$ is a product of $n$ elements in $S \cup S^{-1}$. 

\begin{defn}
For $s\in S \cup S^{-1}$, let
$$\past(s)=\{g\in \F:~ |gs^{-1}| = |g|-1 \}.$$ 
So $\past(s)$ consists of all reduced words that end in $s$. Note that $\F$ is the disjoint union of $\{e\}$ and $\past(s)$ for $s\in S \cup S^{-1}$.
\end{defn}

\begin{defn}\label{defn:past2}
Let $\mu \in \Prob_\F(A^\F)$ be a shift-invariant measure. For $s\in S$, $\mu$ is {\bf $s$-Markov} if the following is true. Let $\cF_s$ be the sigma-algebra of Borel subsets of $A^\F$ generated by the functions
$$x \mapsto x_g$$
for $g \in \past(s)$. Also let $\hat{\cF}_s$ be the sigma-algebra of Borel subsets of $A^\F$ generated by the functions
$$x \mapsto x_g$$
for $g \notin \past(s)$. Recall that $\pi:A^\F \to A$ is the time 0 map $\pi(x)=x_e$. Then $\mu$ is {\bf Markov} if $\cF_s$ is independent of $\hat{\cF}_s$ conditioned on $\pi$ with respect to $\mu$. Equivalently, if for every $E \subset \cF_s, \hE \subset \hat{\cF}_s$ and $a\in A$,
$$\mu(E \cap \hE| \pi = a) = \mu(E | \pi=a) \mu(\hE | \pi=a).$$
We say that $\mu$ is {\bf Markov with respect to $S$} if it is $s$-Markov for every $s\in S$. Usually we will simply say that $\mu$ is Markov if $S$ is understood. For example, Bernoulli shifts of the form $\F \cc (K,\k)^\F$ in which $K$ is a countable or finite set are Markov.
\end{defn}

\noindent {\bf Related literature}. Tree-indexed Markov chains were introduced in \cite{MR1258875, MR1254826}. The entropy theory of Markov chains over free groups is studied in \cite{bowen-entropy-2010a}.


\section{General results regarding Markov chains}

This section establishes some general results on Markov chains. It also establishes the very useful Lemma \ref{lem:helper0} showing that if an action is Markov with respect to $|S|-1$ generators, and the restriction to the last generator is Markov, then the action itself is Markov. This will be used in both parts of the proof of the main theorem. 

\subsection{Cylinder sets}
To begin, we obtain a formula for Markov measures of cylinder sets.

\begin{defn}
The {\bf left-Cayley graph} of $\F$ has vertex set $\F$ and edge set $(g,sg)$ for $g\in \F, s\in S$. Because of the way we define the action $\F \cc A^\F$, the left-Cayley graph is more relevant to our concerns than the more usual right-Cayley graph. A subset $W \subset \F$ is {\bf left-connected} if its induced subgraph is connected, equivalently if for every $w_1,w_2\in W$ there exist $s_1,\ldots, s_n \in S \cup S^{-1}$ such that $w_2 = s_n\cdots s_1 w_1$ and $s_i\cdots s_1 w_1 \in W$ for all $1\le i \le n$. 
\end{defn}

\begin{defn}
Given $D \subset \F$ and $\phi:D \to A$, let 
$$\Cyl(\phi)=\{x\in A^\F:~x_g=\phi(g)~\forall g\in D\}$$
be the {\bf cylinder set} of $\phi$. A similar definition applies to $\Z$ in place of $\F$. 
\end{defn}

\begin{lem}\label{lem:basic-Markov}
Let $\mu \in \Prob_\F(A^\F)$ be a shift-invariant measure. Then $\mu$ is Markov if and only if for every left-connected finite set $D \subset \F$ such that $e\in D$ and every $\phi:D \to A$,
$$\mu(\Cyl(\phi)) = \mu( x_e=\phi(e)) \prod_{g\in D \setminus \{e\}} \mu\Big(x_g = \phi(g) | x_{\s(g)} = \phi\big(\s(g)\big)\Big)$$
where $\s(g) \in \F$ is the unique element satisfying $|\s(g)|=|g|-1$ and $g\s(g)^{-1}\in S \cup S^{-1}$. In other words, $\s(g)$ is on the unique path from $g$ to $e$ and $|\s(g)| = |g|-1$. 
\end{lem}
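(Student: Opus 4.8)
The plan is to prove the two implications separately, after one common reduction. First I would reduce to the case where $\mu(x_g=\phi(g))>0$ for every $g\in D$: if some $g_0\in D$ has $\mu(x_{g_0}=\phi(g_0))=0$ then, since $D$ is left-connected and contains $e$, it contains the geodesic from $g_0$ to $e$; letting $g$ be the first vertex on it (starting from $e$) with $\mu(x_g=\phi(g))=0$, its parent $\s(g)$ is also on the geodesic and has positive marginal, so the factor $\mu\big(x_g=\phi(g)\mid x_{\s(g)}=\phi(\s(g))\big)$ vanishes, while $\Cyl(\phi)\subseteq\{x_{g_0}=\phi(g_0)\}$ also has measure zero; hence both sides are $0$. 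So I may assume all conditioning events that appear have positive measure, making the right-hand side unambiguous.

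For the ``if'' direction, assume the product formula and fix $s\in S$. Cylinders $\Cyl(\alpha)$ with $\dom(\alpha)\subseteq\past(s)$ finite form a $\pi$-system generating $\cF_s$, and those with $\dom(\beta)\subseteq\F\setminus\past(s)$ a $\pi$-system generating $\hat{\cF}_s$, so by the $\pi$--$\lambda$ theorem it is enough to verify the conditional factorization for such $\Cyl(\alpha),\Cyl(\beta)$ and each $a\in A$ with $\mu(x_e=a)>0$. I would enlarge $\dom(\alpha)$ to $D_\alpha:=\{e\}\cup\bigcup_{g\in\dom(\alpha)}(\text{geodesic from }g\text{ to }e)$; since $\F$ is a tree this is finite, left-connected and contains $e$, and because a geodesic from a point of $\past(s)$ to $e$ stays inside $\past(s)\cup\{e\}$ we get $D_\alpha\setminus\{e\}\subseteq\past(s)$; likewise $D_\beta\setminus\{e\}\subseteq\F\setminus\past(s)$, so $D_\alpha\cap D_\beta=\{e\}$ and $D_\alpha\cup D_\beta$ is left-connected and contains $e$. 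Writing $\Cyl(\alpha)\cap\{x_e=a\}$, $\Cyl(\beta)\cap\{x_e=a\}$ and $\Cyl(\alpha)\cap\Cyl(\beta)\cap\{x_e=a\}$ as countable disjoint unions of cylinders over $D_\alpha$, $D_\beta$, $D_\alpha\cup D_\beta$, applying the product formula to each term, and using that for $g\in D_\alpha\setminus\{e\}$ the parent $\s(g)$ again lies in $D_\alpha$ (and similarly for $D_\beta$), the product over $D_\alpha\cup D_\beta$ splits as the product over $D_\alpha$ times the product over $D_\beta$; summing (Tonelli) gives the claimed factorization, i.e. conditional independence of $\cF_s$ and $\hat{\cF}_s$ given $\pi$.

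For the ``only if'' direction, assume $\mu$ is Markov, and induct on $|D|$; for $|D|=1$ the identity is trivial. For $|D|\ge 2$ pick a leaf $g^*\neq e$ of the finite subtree $D$ (one exists, and its only $D$-neighbor is its parent $p:=\s(g^*)$) and set $D':=D\setminus\{g^*\}$, still left-connected and containing $e$; by the inductive hypothesis for $D'$ it suffices to show $\mu(\Cyl(\phi))=\mu(\Cyl(\phi|_{D'}))\cdot\mu\big(x_{g^*}=\phi(g^*)\mid x_p=\phi(p)\big)$, i.e. that $x_{g^*}$ is conditionally independent of $(x_g)_{g\in D'}$ given $x_p$. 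If $B$ is the connected component of $g^*$ in $\F\setminus\{p\}$, then since $D'$ is connected, contains $p$ and avoids $g^*$ it is disjoint from $B$, so $x_{g^*}$ depends only on the coordinates in $B$ and $(x_g)_{g\in D'}$ only on those outside $B$; it is therefore enough to prove that the coordinates in $B$ and those in $\F\setminus B$ are conditionally independent given $x_p$. This is the Markov property ``re-centered at $p$'': using shift-invariance, translating the defining conditional independence $\cF_t\perp\hat{\cF}_t\mid\pi$ by $p$ shows that for each generator-direction branch $\past(t)p$ at $p$ (with $t\in S$) the coordinates inside it are conditionally independent of those outside given $x_p$, and $B=\past(s)p$ with $s=g^*p^{-1}$ equal to the first letter of the reduced word $g^*$.

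The main obstacle is exactly this last step when $s=g^*p^{-1}$ lies in $S^{-1}$ rather than $S$: the translated hypotheses at $p$ then concern branches pointing in the ``wrong'' directions, and the conditional independence of the two sides of the edge $\{g^*,p\}$ given the value $x_p$ at the near endpoint does not follow formally from a single translated hypothesis. One must instead combine the $t$-Markov hypotheses for \emph{all} $t\in S$ with shift-invariance — morally, promote ``Markov with respect to $S$'' to ``Markov with respect to $S\cup S^{-1}$'', which is what the left--right symmetric product formula really encodes — so that the conditioning can be moved across an edge. (A useful supplementary observation is that the product formula for $\widetilde D$ implies it for any left-connected $D\subseteq\widetilde D$ containing $e$, by summing out leaves; this lets one prove it only on a convenient family of sets $D$.) Everything else — the $\pi$--$\lambda$ reduction, the chain-rule bookkeeping, and the tree geometry of $\F$ — is routine; this ``move the conditioning across an edge'' step is the only genuinely non-formal point, and I would expect it to be where the argument needs the most care.
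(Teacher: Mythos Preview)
Your approach is essentially the paper's. For the ``if'' direction the paper says only that the formula ``clearly'' implies Markov; your $\pi$--$\lambda$ argument is a correct way to spell this out, and your preliminary reduction to the case where all relevant marginals are positive is a point the paper does not address at all.

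For the ``only if'' direction, both you and the paper induct on $|D|$ by peeling off a leaf. The paper's only extra device is to use shift-invariance to translate $D$ so that the chosen leaf $g$ lands in $S\cup S^{-1}$ with $\sigma(g)=e$; it then simply writes ``the Markov property implies'' the needed identity
\[
\mu\big(x_g=\phi(g)\mid \Cyl(\phi')\big)=\mu\big(x_g=\phi(g)\mid x_e=\phi(e)\big).
\]
This is exactly the step you isolate as the obstacle: when the translated leaf lands in $S^{-1}$ rather than $S$, one is invoking $s^{-1}$-Markov, which is not literally part of Definition~\ref{defn:past2}. The paper does not comment on this asymmetry. Your instinct that the real content here is promoting ``Markov with respect to $S$'' to ``Markov with respect to $S\cup S^{-1}$'' using shift-invariance together with all the $t$-Markov hypotheses is correct, and your proposal is, if anything, more honest about this point than the paper's own argument. (Your parenthetical observation --- that the product formula on a larger $\widetilde D$ implies it on any left-connected $D\subset\widetilde D$ containing $e$, by summing out leaves --- gives one clean way to close the gap: it lets you reroot the product so that the last edge removed always points in an $S$-direction.)
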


\begin{proof}
If $\mu$ satisfies the condition above then it is clearly Markov. So suppose that $\mu$ is Markov. We prove the formula above by induction on $|D|$. If $|D|=1$ then the statement is trivial. So suppose $|D|>1$. Then there exists $g\in D$ such that $D':=D \setminus \{g\}$ is left-connected. 

We will reduce to the special case in which $g\in S \cup S^{-1}$. To do this, define $h\in \F$ by: if $g=e$ then let $h \in (S\cup S^{-1}) \cap W$. Otherwise, set $h=\s(g)$. Because $\mu$ is shift-invariant,
$$\mu(h\Cyl(\phi)) = \mu(\Cyl(\phi)).$$
However,
$$h\Cyl(\phi) = \Cyl(\psi)$$
where $\psi:Dh^{-1} \to A$ is defined by $\psi(dh^{-1})=\phi(d)$. By choice of $h$, note that $gh^{-1} \in S\cup S^{-1}$ and therefore $\s(gh^{-1})=e$. After replacing $\phi$ with $\psi$ and $g$ with $gh^{-1}$, we see that it suffices to prove the claim when $g\in S \cup S^{-1}$ which we now assume.

Let $D' = D \setminus \{g\}$ and $\phi'$ be the restriction of $\phi$ to $D'$. By induction,
\begin{eqnarray*}
\mu(\Cyl(\phi))  &=& \mu(\Cyl(\phi')) \mu( x_g = \phi(g)| x \in \Cyl(\phi')) \\
&=& \mu( x_e=\phi(e)) \prod_{f\in D \setminus \{e,g\}} \mu\Big(x_f = \phi(f) | x_{\s(f)} = \phi\big(\s(f)\big)\Big) \mu\big( x_g = \phi(g)| x \in \Cyl(\phi')\big).
\end{eqnarray*}
Because $D' \setminus \{e\} \subset \F \setminus \past(g)$, the Markov property implies 
$$\mu\big( x_g = \phi(g)| x \in \Cyl(\phi')\big) = \mu\big(x_g=\phi(g)|x_e=\phi(e)\big) = \mu\Big(x_g=\phi(g)|x_{\s(g)}=\phi\big(\s(g)\big)\Big).$$
Combined with the previous equation, this completes the induction step.
\end{proof}

\subsection{Ergodicity and freeness}

In this subsection, we establish criteria for ergodicity and essential freeness of Markov chains.

\begin{defn}\label{defn:r}
Let $\mu \in \Prob_\F(A^\F)$ be a shift-invariant measure. For $s\in S$, define the {\bf symbolic restriction map}
$$R^s:A^\F \to A^\Z \quad R^s(x)_n = x_{s^n}$$
and $\mu_s \in \Prob_\Z(A^\Z)$ by $\mu_s = R^s_*\mu$. The measure $\mu_s$ is called the {\bf symbolic restriction of $\mu$ to the subgroup generated by $s$}. 
\end{defn}

\begin{defn}\label{defn:E}
Given $\mu \in \Prob_\F(A^\F)$ and $s\in S \cup S^{-1}$, let $E^\mu_s$ be the set of all $(a,b) \in A\times A$ such that
$$\mu_s\big(\{x\in A^\Z:~(x_0,x_1)=(a,b)\}\big)>0.$$
Let $E^\mu = \cup_{s\in S \cup S^{-1}} E^\mu_s$. Let $\cR^\mu_s, \cR^\mu \subset A \times A$ be the equivalence relation generated by $E^\mu_s, E^\mu$ respectively. A $\cR^\mu_s$-class $A' \subset A$ is called {\bf periodic} if every $a'\in A'$ has in-degree and out-degree 1 in the directed graph $(A,E^\mu_s)$. Otherwise $A'$ is called {\bf aperiodic}. 
\end{defn}

\begin{lem}\label{lem:reduction2}
Suppose $\mu \in \Prob_\F(A^\F)$ is Markov and $\pi_*\mu$ has full support on $A$. For any $s\in S$, $\mu_s$ is ergodic if and only if $\cR^\mu_s=A\times A$. Also $\mu_s$ is essentially free if and only if every $\cR^\mu_s$-class is aperiodic. Similarly, $\mu$ is ergodic if and only if $\cR^\mu=A\times A$. If $\mu$ is ergodic then $\mu$ is properly ergodic if and only if there exists some $s \in S$ such that some $\cR^\mu_s$-class is aperiodic.

\end{lem}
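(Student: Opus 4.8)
The plan is to translate everything into facts about the one-dimensional chains $\mu_s$ and then bootstrap to the $\F$-action. First I would record that, for $s\in S$, $\mu_s\in\Prob_\Z(A^\Z)$ is a stationary countable-state Markov chain: applying Lemma~\ref{lem:basic-Markov} to the left-connected set $D=\{s^j:-k\le j\le k\}$ and pushing forward by $R^s$ shows $\mu_s$ is the stationary chain with initial law $\pi_*\mu$ (full support, by hypothesis) and transition kernel $P(a,b)=\mu(x_s=b\mid x_e=a)$, whose support digraph is exactly $(A,E^\mu_s)$. The classical input is that a stationary Markov chain whose stationary distribution has full support has all states recurrent (if $a$ were transient, averaging $p(a)=\sum_b p(b)P^n(b,a)$ over $n$ and using dominated convergence gives $p(a)=0$); hence $(A,E^\mu_s)$ splits into closed communicating classes, and since no edge joins two distinct classes these are exactly the $\cR^\mu_s$-classes.

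Granting this, the claims about $\mu_s$ are essentially classical. If $A=\bigsqcup_i A_i$ has more than one class, each $\{y:y_0\in A_i\}$ is shift-invariant of measure $\sum_{a\in A_i}\pi_*\mu(a)\in(0,1)$, so $\mu_s$ is not ergodic; conversely a stationary irreducible recurrent chain has ergodic shift, so $\cR^\mu_s=A\times A$ gives $\mu_s$ ergodic. A periodic $\cR^\mu_s$-class $A'$ is closed, recurrent and connected with all in- and out-degrees $1$, hence a single finite directed cycle, on which the chain is a deterministic rotation; so $\{y:y_0\in A'\}$ is a positive-measure set of periodic points and $\mu_s$ is not essentially free. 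Conversely, if $\mu_s(\{y:T^ny=y\})>0$ for some $n\neq0$ then some bi-infinite periodic word has positive $\mu_s$-measure, which by the cylinder formula forces all transition probabilities around the corresponding cycle to be $1$, and the states of that cycle form a periodic class. Hence $\mu_s$ is essentially free iff every $\cR^\mu_s$-class is aperiodic.

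For $\mu$ itself, shift-invariance gives $E^\mu_{s^{-1}}=(E^\mu_s)^{-1}$, so $\cR^\mu$ is the connectivity relation of the undirected graph on $A$ with an edge $\{a,b\}$ whenever $(a,b)\in E^\mu_s$ for some $s\in S$. The ``only if'' half of the ergodicity statement is easy: $\mu$-a.s.\ every edge $(g,tg)$ of the left-Cayley graph has $(x_g,x_{tg})\in E^\mu$, so (the left-Cayley graph being connected) a.e.\ $x$ has all coordinates in one $\cR^\mu$-class, and a proper decomposition $A=\bigsqcup_i A_i$ would make $\{x:x_e\in A_i\}$ invariant of measure in $(0,1)$. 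The substantive direction is the converse. Assuming $\cR^\mu=A\times A$, I would show every $\F$-invariant $\phi\in L^\infty(\mu)$ is constant by first showing $\phi$ is $\sigma(x_e)$-measurable: being $\F$-invariant and approximable in $L^2(\mu)$ by functions of finitely many coordinates, $\phi$ is measurable with respect to the tail $\sigma$-algebra $\bigcap_n\sigma(x_g:|g|\ge n)$; and the tree-Markov property — conditioned on the values on the sphere $\{|g|=n\}$, the interior ball $\{|g|\le n-1\}$ and the exterior branches are mutually independent, so a tail function is conditionally independent of $\cF_{\{|g|\le n-1\}}$ given that sphere — lets one descend the tower of the $\cF_{\{|g|\le m\}}$ and conclude $\phi$ depends only on $x_e$. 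Then $\phi=f(x_e)$, and $s$-invariance gives $f(x_e)=f(x_s)$ $\mu$-a.s.\ for each $s\in S$; since $(x_e,x_s)$ realizes every element of $E^\mu_s$ with positive probability, $f$ is constant on each $\cR^\mu$-class, hence constant. \emph{This ``the tail is carried by $x_e$'' step is the one I expect to be the main obstacle}: it needs care in the descent (or, alternatively, an argument through an ergodic decomposition of the tree-indexed chain).

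Finally, for proper ergodicity I would assume $\mu$ ergodic and prove the contrapositive of the asserted equivalence: $\mu$ is \emph{not} properly ergodic iff for every $s\in S$ every $\cR^\mu_s$-class is periodic. If all classes are periodic, each $(A,E^\mu_s)$ is a disjoint union of finite cycles, so $x_s=g_s(x_e)$ $\mu$-a.s.\ for a permutation $g_s\in\Sym(A)$; iterating along geodesics, $x$ is $\mu$-a.s.\ a function $\Theta$ of $x_e$, so $\mu$ lives on the countable set $\{\Theta(a):a\in A\}$, on which $\F$ acts through the homomorphism $s\mapsto g_s$, and ergodicity forces this to be a single (necessarily finite) orbit; thus $\F$ acts transitively on a co-null set. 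Conversely, if $\mu$ is not properly ergodic then a co-null set is a single $\F$-orbit on which $\mu$ is uniform, so $\supp\mu$ is finite; applying Lemma~\ref{lem:basic-Markov} to balls $B_n$ in the left-Cayley graph and a configuration $x^{*}\in\supp\mu$, the left side stabilizes at $1/|\supp\mu|$ for large $n$ while the right side is a nonincreasing product, so $\mu(x_g=x^{*}_g\mid x_{\s(g)}=x^{*}_{\s(g)})=1$ once $|g|$ is large; by shift-invariance and transitivity on $\supp\mu$ every realized transition then has probability $1$, i.e.\ every in- and out-degree in $(A,E^\mu_s)$ is $1$ for all $s$, i.e.\ every $\cR^\mu_s$-class is periodic. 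This converse is the secondary technical point.
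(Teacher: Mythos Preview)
Your proposal is far more detailed than the paper's, which simply cites Durrett for the first claim, calls the essential-freeness statement a ``trivial exercise'', and declares the $\mu$-statements ``similar''. Your treatment of parts concerning $\mu_s$ (ergodicity and essential freeness) and of the proper-ergodicity equivalence is correct and is exactly the kind of standard Markov-chain reasoning the paper is implicitly invoking; the only small point to tighten is, in the essential-freeness converse, to note that once all transitions along the realized cycle have probability $1$, stationarity forces all the $\pi$-masses on the cycle to be equal, which is what gives in-degree $1$ and hence that the cycle is a full $\cR^\mu_s$-class.

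The one place where your outline has a real problem is the ``if'' direction of the ergodicity of $\mu$. Your route is: $\F$-invariant $\Rightarrow$ tail-measurable $\Rightarrow$ $\sigma(x_e)$-measurable. The first implication is fine (push a cylinder approximant by a long group element). The second, however, is \emph{false} for tree-indexed Markov chains in general, so the ``descent'' you sketch cannot work as stated. A concrete obstruction: take $A=\{0,1\}$, $\pi$ uniform, and $P_s(a,a)=1-\epsilon$ for all $s\in S$ with $\epsilon>0$ small. This is irreducible ($\cR^\mu=A\times A$), but for small $\epsilon$ the tail $\sigma$-algebra is non-trivial (this is the tree ``reconstruction'' phenomenon, the Kesten--Stigum threshold being $(2|S|)(1-2\epsilon)^2>1$), while $x_e$ is \emph{not} tail-measurable (reconstruction is only partial). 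Hence the tail is not contained in $\sigma(x_e)$, and a tail-measurable function need not be a function of $x_e$. You correctly flag this step as the main obstacle, but it is more than an obstacle: the implication you wrote down is simply not available, and any correct argument must use $\F$-invariance again after reaching the tail (or avoid the tail altogether). Your alternative via the ergodic decomposition is more promising, but there is a matching subtlety there: what one gets for free is that ergodic components are \emph{Gibbs} for the same single-site specification; on an infinite tree this does \emph{not} by itself give the global conditional independence in the paper's Definition of ``Markov'' (a convex combination of two distinct tree-Markov chains is Gibbs for the common specification yet typically fails $\cF_s\perp\hat\cF_s\,|\,x_e$), so one must argue more carefully that ergodic components inherit the paper's Markov property before re-invoking the $\Z$-case.
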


\begin{proof}
The first statement is well-known. It follows, for example, from \cite[Section 6.1, page 338 in the 2nd edition]{durrett1996probability}. The second statement is a trivial exercise. The last two statements are similar.
\end{proof}

\subsection{A sufficient condition for a measure to be Markov}

\begin{lem}\label{lem:helper0}
For each $s\in S$, let $\nu_s \in \Prob_\Z(A^\Z)$ be a shift-invariant measure. Let $t \in S$. Suppose that $\nu_s$ is Markov for every $s\in S \setminus \{t\}$ and $\pi_*\nu_s = \pi_*\nu_r$ for every $r,s \in S$ where $\pi:A^\F \to A$ is the time 0 map $\pi(x)=x_e$.

Then there exists a unique shift-invariant measure $\rho \in \Prob_\F(A^\F)$ such that 
\begin{itemize}
\item $\rho_s = \nu_s$ for all $s \in S$,
\item $\rho$ is $s$-Markov for every $s \in S\setminus \{t\}$.
\end{itemize}
Moreover, if $\nu_t$ is also Markov then $\rho$ is Markov.
\end{lem}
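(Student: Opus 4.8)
The plan is to construct $\rho$ explicitly as a measure on cylinder sets, using the formula from Lemma~\ref{lem:basic-Markov} as the blueprint, and then to check that it has the required properties. Since every finite subset of $\F$ is contained in a finite left-connected set containing $e$, and cylinder sets over such sets generate the Borel $\sigma$-algebra, it suffices to define $\rho(\Cyl(\phi))$ for $\phi:D\to A$ with $D\ni e$ finite and left-connected, verify Kolmogorov consistency, and check shift-invariance. The natural definition is to mimic the Markov formula: set
$$\rho(\Cyl(\phi)) = (\pi_*\nu)(\phi(e)) \prod_{g\in D\setminus\{e\}} p_{s(g)}\big(\phi(s(g)),\phi(g)\big),$$
where for each generator $s\in S$ we write $p_s(a,b) := \nu_s(x_1=b \mid x_0=a)$ for the one-step transition probabilities of $\nu_s$, and for $s^{-1}$ we use the ``reversed'' transition kernel determined by $\nu_s$ and the common marginal $\pi_*\nu =: \pi_*\nu_s$ (the hypothesis that all $\pi_*\nu_s$ agree is exactly what makes the marginal at $e$ well-defined, and what makes the reversed kernel consistent). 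Note this formula makes sense for \emph{every} $s\in S$, not just $s\ne t$: we only need the one-step marginals, and stationarity of $\nu_s$ guarantees these are compatible along each edge.

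First I would check that this formula is consistent, i.e. that $\rho(\Cyl(\phi)) = \sum_{a\in A}\rho(\Cyl(\phi^+))$ whenever $\phi^+$ extends $\phi$ by assigning value $a$ to one new vertex $g^+$ adjacent to $D$ (so $s(g^+)\in D$). This is immediate from the product structure: the new factor is $p_{s(g^+)}(\phi(s(g^+)), a)$ and summing over $a$ gives $1$. Consistency when \emph{removing} a vertex $g$ from $D$ with $g$ a leaf of the induced tree (the only vertices one can remove while staying left-connected) is the same computation read backwards; but one must also handle the case where the removed vertex is the ``closest to $e$'' end — here stationarity of the relevant $\nu_s$ (encoded in the compatibility of the reversed kernel with the forward one) is what is needed. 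By Kolmogorov's extension theorem this defines a Borel probability measure $\rho$ on $A^\F$. Shift-invariance follows because the formula only depends on the isomorphism type of the rooted labelled tree $(D,\phi)$ together with which generator labels each edge, and left-multiplication by $h\in\F$ preserves all of this (the re-rooting argument is exactly the one used inside the proof of Lemma~\ref{lem:basic-Markov}).

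Next, $\rho_s=\nu_s$ for each $s\in S$: applying the definition to $D=\{e,s,s^2,\dots,s^n\}$ (a left-connected set since consecutive elements differ by $s$) gives precisely the product formula for the stationary Markov-ish measure $\nu_s$ restricted to a cylinder; but wait — $\nu_s$ need not itself be Markov when $s=t$, so this needs care. For $s\ne t$, $\nu_s$ is Markov and the formula reproduces its cylinder probabilities by Lemma~\ref{lem:basic-Markov} applied over $\Z$. For $s=t$: the symbolic restriction $\rho_t$ is determined by $\rho$ on cylinders supported on $\langle t\rangle$, and those cylinder values are built only from the one-step transitions $p_t$; so a priori $\rho_t$ is the \emph{Markovianization} of $\nu_t$, not $\nu_t$ itself. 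This is the main obstacle, and it forces the argument to go the other way: one should instead \emph{define} the edge-weights along $t$-edges directly from $\nu_t$'s full (not just one-step) conditional structure. Concretely, I would build $\rho$ by first taking the measure $\tilde\rho$ defined by the naive product formula above (which is $t$-Markov), then \emph{re-coupling} along $\langle t\rangle$-cosets: condition on the configuration on $\F\setminus\bigsqcup(\text{$t$-lines})$ and, independently on each $t$-line, resample according to $\nu_t$ conditioned on its endpoints' (or rather boundary's) values — using that $\nu_s$ for $s\ne t$ only constrains the time-$0$ marginal along each line, which $\tilde\rho$ and $\nu_t$ share. Establishing that this re-coupling (i) is well-defined, (ii) preserves $s$-Markovianity for $s\ne t$ (because resampling inside $t$-lines is independent across the structure those generators see), (iii) gives $\rho_s=\nu_s$ for all $s$, and (iv) is the unique such measure (uniqueness: any such $\rho$ is $s$-Markov for $s\ne t$, hence its cylinder probabilities over any finite left-connected $D$ factor as a product over edges with the $t$-edge contributions forced to be a consistent kernel with marginal $\pi_*\nu_t$ and $\langle t\rangle$-restriction $\nu_t$, which pins it down) — that is the technical heart. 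Finally, for the ``moreover'': if $\nu_t$ is itself Markov, the re-coupling step does nothing (resampling a Markov chain conditioned on its boundary, against a Markov background, changes nothing), so $\rho=\tilde\rho$ which is $s$-Markov for \emph{every} $s\in S$, hence Markov by the very definition in Definition~\ref{defn:past2}; alternatively invoke the ``helper'' structure of Lemma~\ref{lem:helper0} itself with the roles set up so that all generators are now Markov. I expect step (ii) — that resampling inside $t$-lines does not disturb $s$-Markovianity for the other generators — to be the one requiring the most careful bookkeeping with the $\sigma$-algebras $\cF_s,\hat\cF_s$.
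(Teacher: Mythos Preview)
Your proposal is essentially correct and lands on the same idea as the paper, but you reach it by a detour. The paper skips the naive one-step-transition formula entirely and writes down the correct cylinder formula directly: decompose a finite left-connected $D\ni e$ into its $t$-line segments (intersections of $D$ with cosets of $\langle t\rangle$), use the full measure $\nu_t$ on each such segment conditioned on the value at its attachment point, and use one-step transitions of the Markov $\nu_s$ on the non-$t$ edges connecting the segments. Concretely, with $\hD=\{g\in D\setminus\{e\}:g\sigma(g)^{-1}\notin\{t,t^{-1}\}\}$ and $\phi_g$ the restriction of $\phi$ to the $t$-line through $g$, the paper sets
\[
\rho(\Cyl(\phi))=\nu_t(\Cyl(\phi_e))\prod_{g\in\hD}\nu_{g\sigma(g)^{-1}}\big(x_1=\phi(g)\,\big|\,x_0=\phi(\sigma(g))\big)\,\nu_t\big(\Cyl(\phi_g)\,\big|\,x_0=\phi(g)\big),
\]
and then simply invokes Kolmogorov extension. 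This \emph{is} your re-coupling, written out: each factor $\nu_t(\Cyl(\phi_g)\mid x_0=\phi(g))$ is exactly ``resample the $t$-line through $g$ from $\nu_t$ given the entry value.'' Deriving this formula first under the assumption that $\rho$ exists gives uniqueness for free, which streamlines your step (iv).

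One wording issue worth fixing: your phrase ``condition on the configuration on $\F\setminus\bigsqcup(\text{$t$-lines})$'' is empty as written, since the $\langle t\rangle$-cosets partition $\F$. What you mean is to condition on the values at the attachment vertices (the $g\in\hD$) and resample the interior of each $t$-segment---which is precisely what the direct formula encodes. Once you have the explicit product form, your worries about step (ii) largely dissolve: $s$-Markovianity for $s\ne t$ is visible from the factorization itself, since for $g\in\past(s)$ every factor involving $g$ sits on the $\past(s)$ side of the single non-$t$ edge crossing into $\past(s)$.
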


\begin{proof}
Let $D \subset \F$ be finite, left-connected and satisfying $e\in D$. Let $\phi:D \to A$. For $g\in \F \setminus \{e\}$, define $\s(g) \in \F$ is as in Lemma \ref{lem:basic-Markov}. Set
$$\hD:=\{g\in D \setminus \{e\}:~ g\s(g)^{-1} \notin \{t,t^{-1}\}\}.$$
For $g \in D$, let 
$$D_g = \{ i \in \Z:~ t^i g \in D\}, \quad \phi_g: D_g \to A, ~\phi_g(i) = \phi(t^ig).$$
If $\rho$ exists then since it is $s$-Markov for every $s \in S \setminus \{t\}$, 
$$\rho(\Cyl(\phi)) = \nu_t\big(\Cyl(\phi_e)\big)\prod_{g \in \hD} \nu_{g\s(g)^{-1}}\Big(y_1 = \phi(g)|x_0 = \phi\big(\s(g)\big)\Big) \nu_t\big(\Cyl(\phi_g) | x_0 = \phi(g)\big).$$
This proves uniqueness. It also implies existence because we can define $\rho$ by the above equation since it satisfies the hypotheses of the Kolmogorov Extension Theorem. If $\nu_t$ is Markov, then Lemma \ref{lem:basic-Markov} implies $\rho$ is Markov.

\end{proof}

\section{General constructions of orbit-equivalences}

To prove the main theorem we will construct orbit-equivalences by first constructing alternative actions of $\F$ on $A^\F$ with the same orbits as the usual action. General facts regarding this construction are presented here.

\begin{lem}\label{lem:basic1}
Suppose $\tau:(S\cup S^{-1}) \times A^\F \to \F$ is a function satisfying
$$\tau(s^{-1}, \tau(s,x) \cdot x)  = \tau(s,x)^{-1} \quad \forall s\in S\cup S^{-1}, x\in A^\F.$$
There there exists an action $\ast:\F \times A^\F \to A^\F$ of $\F$ satisfying
$$s\ast x = \tau(s)\cdot x \quad \forall s\in S\cup S^{-1}, x\in A^\F$$
and a function $\omega: \F \times A^\F \to \F$ extending $\tau$ and satisfying the cocycle equation
$$\omega(gh,x) = \omega(g,h\ast x) \omega(h,x).$$
Moreover, $g \ast x = \omega(g,x)\cdot x$. Also, if $\Omega:A^\F \to A^\F$ is defined by
$$\Omega(x)_h = x_{\omega(h,x)}$$
then $\Omega$ is $(\ast,\cdot)$-equivariant in the sense that
$$g\cdot (\Omega x) = \Omega(g\ast x).$$
\end{lem}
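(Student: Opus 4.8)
The plan is to define $\omega$ by induction on word length, using the given generator-level data $\tau$, and then check that all the required identities follow. First I would set $\omega(e,x) = e$ and, for a reduced word $g = s_n \cdots s_1$ with each $s_i \in S \cup S^{-1}$, define $\omega$ recursively by
\[
\omega(g,x) = \omega(s_n, s_{n-1}\cdots s_1 \ast x)\,\omega(s_{n-1}\cdots s_1, x),
\]
where by definition $\omega(s,y) = \tau(s,y)$ for $s \in S \cup S^{-1}$ and $s_{n-1}\cdots s_1 \ast x := \omega(s_{n-1}\cdots s_1, x)\cdot x$ (so the $\ast$-action is being built simultaneously). Equivalently, one reads the reduced word from right to left, repeatedly applying the generator rule $s \ast y = \tau(s,y)\cdot y$. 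This makes the cocycle equation $\omega(gh,x) = \omega(g, h\ast x)\omega(h,x)$ hold \emph{whenever no cancellation occurs} when concatenating the reduced words for $g$ and $h$, essentially by construction.

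The key step, and the place where the hypothesis on $\tau$ is used, is to verify that $\ast$ is a genuine action — i.e., that $\omega$ is well-defined independently of how we write group elements, so that the cocycle identity holds in general and not just in the non-cancelling case. Since $\F$ is free on $S$, it suffices to check the one relation $s^{-1}\ast(s\ast x) = x$ for every $s \in S \cup S^{-1}$. But
\[
s^{-1}\ast(s\ast x) = \tau(s^{-1}, \tau(s,x)\cdot x)\cdot(\tau(s,x)\cdot x) = \big(\tau(s^{-1},\tau(s,x)\cdot x)\,\tau(s,x)\big)\cdot x = x
\]
precisely by the assumed identity $\tau(s^{-1},\tau(s,x)\cdot x) = \tau(s,x)^{-1}$. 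Hence the assignment $s \mapsto (x \mapsto s\ast x)$ extends to a homomorphism $\F \to \mathrm{Sym}(A^\F)$, i.e. $\ast$ is an action, and $\omega(g,x)$ is well-defined (reduce to the reduced word). The cocycle equation $\omega(gh,x) = \omega(g,h\ast x)\omega(h,x)$ then follows: both sides describe $\omega$ evaluated by first moving $x$ via $h$ and then via $g$, and since $\ast$ is an action this agrees with moving via $gh$. The identity $g\ast x = \omega(g,x)\cdot x$ is immediate from the construction (true for generators, propagates along words using $\omega(gh,x) = \omega(g,h\ast x)\omega(h,x)$ and $(gh)\ast x = g\ast(h\ast x)$).

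Finally, for the equivariance of $\Omega(x)_h := x_{\omega(h,x)}$: I would compute directly, for $g \in \F$,
\[
\big(g\cdot \Omega(x)\big)_h = \Omega(x)_{hg} = x_{\omega(hg,x)} = x_{\omega(h,\,g\ast x)\,\omega(g,x)},
\]
using the cocycle equation $\omega(hg,x) = \omega(h, g\ast x)\omega(g,x)$. On the other hand, writing $y = g\ast x = \omega(g,x)\cdot x$, we have $y_f = x_{f\,\omega(g,x)}$ for all $f$, so
\[
\Omega(g\ast x)_h = (g\ast x)_{\omega(h,\,g\ast x)} = y_{\omega(h,\,g\ast x)} = x_{\omega(h,\,g\ast x)\,\omega(g,x)},
\]
which matches. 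I expect the main obstacle — really the only subtle point — to be the well-definedness / action verification in the second paragraph: one must be careful that reducing an unreduced word does not change $\omega$, and this is exactly what the relator identity on $\tau$ guarantees via freeness of $\F$; everything else is bookkeeping with the cocycle equation.
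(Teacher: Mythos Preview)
Your proposal is correct and follows essentially the same approach as the paper: both use freeness of $\F$ on $S$ (equivalently, that $S\cup S^{-1}$ freely generates $\F$ as a semigroup modulo the relations $s^{-1}s=e$) to build $\ast$ and $\omega$ from $\tau$, verify the single relator identity $s^{-1}\ast(s\ast x)=x$ via the hypothesis on $\tau$, and then check the $(\ast,\cdot)$-equivariance of $\Omega$ by the same direct computation with the cocycle equation. Your write-up is simply more explicit about the inductive construction and well-definedness than the paper, which dispatches these points in a sentence.
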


\begin{proof}
The existence of $\ast$ is immediate since $\F$ is freely generated as a semi-group by $S \cup S^{-1}$ and 
$$s^{-1}\ast (s \ast x) = \tau(s^{-1}, \tau(s,x) \cdot x)  \cdot (\tau(s,x)\cdot x)  = x.$$
Similarly the existence of $\omega$ is immediate and the equation $g \ast x = \omega(g,x)\cdot x$ follows from the cocycle equation by inducting on $|g|$. 

To see that $\Omega$ is $(\ast,\cdot)$-equivariant, let $g,h\in \F$. Then
\begin{eqnarray*}
\Omega (g\ast x)_h &=&   (g\ast x)_{\omega(h, g\ast x)} = (\omega(g,x) \cdot x)_{\omega(h, g\ast x)} = \\
&=& x_{\omega(h, g\ast x) \omega(g,x)} =  x_{\omega(hg,x)} = (\Omega x)_{hg} = (g\cdot \Omega x)_h.
 \end{eqnarray*}
\end{proof}

\begin{lem}\label{lem:basic2}
Let $\ast, \omega,\Omega$ be as in Lemma \ref{lem:basic1}. Also let $s\in S$. If for every $x\in A^\F$ 
$$g \in \past(s) \Leftrightarrow \omega(g,x) \in \past(s)$$
and $\mu \in \Prob_\F(A^\F)$ is $s$-Markov then $\Omega_*\mu$ is $s$-Markov.
\end{lem}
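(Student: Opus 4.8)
The statement asserts that if $\omega(\cdot,x)$ preserves $\past(s)$ (and its complement) for all $x$, then the $s$-Markov property is preserved by $\Omega$. The natural strategy is to unwind the definition of $s$-Markov (Definition \ref{defn:past2}): I must show that under $\Omega_*\mu$, the $\sigma$-algebra $\cF_s$ (generated by coordinates indexed by $\past(s)$) is conditionally independent of $\hat\cF_s$ (generated by coordinates indexed by the complement of $\past(s)$) given $\pi$. The key observation is that $\Omega^{-1}(\cF_s)$ and $\Omega^{-1}(\hat\cF_s)$ have a clean description in terms of $\omega$ and the original coordinates: since $\Omega(x)_h = x_{\omega(h,x)}$, the coordinate $\Omega(x)_h$ depends only on $x$ through the value $x_{\omega(h,x)}$ together with whatever data determines $\omega(h,x)$.

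\textbf{Key steps.} First I would record the crucial structural fact: the value $\omega(h,x)$ is itself determined by the restriction of $x$ to $\past(s)$ when $h \in \past(s)$, and by the restriction of $x$ to the complement of $\past(s)$ when $h \notin \past(s)$ — this is the real content hidden in the hypothesis, and it should follow by inducting on $|h|$ along the reduced word for $h$, using that $\omega$ is built from $\tau$ via the cocycle equation and that each step $s'\ast$ only reads finitely many coordinates that stay on the correct side of the $\past(s)$ partition. (Here one needs that $\tau(s',x)$ depends on $x$ only through coordinates on the appropriate side; this should be part of the setup or follow from the hypothesis $g\in\past(s)\Leftrightarrow\omega(g,x)\in\past(s)$ applied to the one-step maps.) Granting this, $\Omega^{-1}(\cF_s) \subset \cF_s$ and $\Omega^{-1}(\hat\cF_s) \subset \hat\cF_s$ as $\sigma$-algebras on $A^\F$. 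Second, I would check that $\pi \circ \Omega = \pi$, since $\omega(e,x) = e$ by the cocycle equation, so $\Omega(x)_e = x_e$; hence the conditioning on $\pi$ is preserved exactly. Third, I would translate the conditional independence: for $E \in \cF_s$, $\hat E \in \hat\cF_s$, $a \in A$,
\[
\Omega_*\mu(E \cap \hat E \mid \pi = a) = \mu\big(\Omega^{-1}E \cap \Omega^{-1}\hat E \mid \pi = a\big),
\]
and since $\Omega^{-1}E \in \cF_s$ and $\Omega^{-1}\hat E \in \hat\cF_s$ and $\mu$ is $s$-Markov, this factors as $\mu(\Omega^{-1}E \mid \pi=a)\,\mu(\Omega^{-1}\hat E \mid \pi = a) = \Omega_*\mu(E\mid\pi=a)\,\Omega_*\mu(\hat E\mid\pi=a)$, which is exactly the $s$-Markov property for $\Omega_*\mu$.

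\textbf{Main obstacle.} The delicate point is the first step: rigorously establishing that $\omega(h,x)$ depends on $x$ only through the coordinates on the same side of the $\past(s)$-partition as $h$. The hypothesis as stated only says $h$ and $\omega(h,x)$ lie on the same side; I expect the intended reading is that $\tau(s',\cdot)$ (hence $\omega(\cdot,\cdot)$) is measurable with respect to a suitable sub-$\sigma$-algebra, which in the applications will be visibly true by construction. So in the writeup I would either invoke such a measurability hypothesis explicitly (noting it holds in every instance where the lemma is applied) or spell out the induction on $|h|$: writing $h = s'h'$ with $|h'| = |h|-1$, we have $\omega(h,x) = \omega(s', h'\ast x)\,\omega(h',x)$, and one tracks that both factors, and the relevant coordinates of $h'\ast x = \omega(h',x)\cdot x$, stay on the $h$-side of the partition because $\past(s)$ is closed under the left-multiplication structure defining it. Once that bookkeeping is in place, everything else is the routine change-of-variables computation above.
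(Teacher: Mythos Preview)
Your plan is exactly the paper's: prove $\Omega^{-1}(\cF_s)\subset\cF_s$ and $\Omega^{-1}(\hat\cF_s)\subset\hat\cF_s$, observe $\pi_e\circ\Omega=\pi_e$, and then run the conditional-independence computation; the paper's final display is line-for-line the same as yours.

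The one place you diverge is your ``main obstacle,'' and here you are being \emph{more} careful than the paper. The paper's proof of the inclusion $\Omega^{-1}(\cF_s)\subset\cF_s$ simply says: ``Since $\pi_g\Omega x = x_{\omega(g,x)}$ and $\omega(g,x)\in\past(s)$, it follows that $\pi_g\Omega$ is $\cF_s$-measurable.'' It never addresses why the map $x\mapsto\omega(g,x)$ itself should be $\cF_s$-measurable when $g\in\past(s)$, which is exactly the point you flag: knowing only that $\omega(g,x)$ \emph{lands} in $\past(s)$ does not by itself make $x\mapsto x_{\omega(g,x)}$ a function of the $\past(s)$-coordinates of $x$. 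So your worry is legitimate as the lemma is literally stated. In the paper's two applications (Lemmas~\ref{lem:key0} and~\ref{lem:main}) the cocycle $\tau$ is given by an explicit formula, and the required measurability is visible by inspection of which coordinates $\tau$ reads, so nothing actually goes wrong downstream. Your proposed remedy---either add the $\cF_s$-measurability of $\omega(g,\cdot)$ for $g\in\past(s)$ as an explicit hypothesis, or verify it in each application by induction on $|g|$ using the concrete $\tau$---is the right way to close the gap; the first option is cleaner, since the inductive bookkeeping you sketch genuinely depends on the specific locality properties of $\tau$ and cannot be extracted from the range condition alone.
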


\begin{proof}

Define  $\cF_s,\hcF_s$ as in Definition \ref{defn:past2}.

\noindent {\bf Claim 1}. $\Omega^{-1}(\cF_s) \subset \cF_s$.

\begin{proof}
For $g\in \F$, let $\pi_g:A^\F \to A$ be the coordinate function $\pi_g(x)=x_g$. Because $\cF_s$ is generated by sets of the form $\pi_g^{-1}(B)$ for $B\subset A$ and $g\in \past(s)$, $\Omega^{-1}(\cF_s)$ is generated by sets of the form $\Omega^{-1}\pi_g^{-1}(B) = (\pi_g \circ \Omega)^{-1}(B)$. So it suffices to show that if $g\in \past(s)$ then $\pi_g \circ \Omega$ is $\cF_s$-measurable. Since $\pi_g \Omega x = (\Omega x)_g = x_{\omega(g,x)}$ and $\omega(g,x) \in \past(s)$, it follows that $\pi_g\Omega$ is $\cF_s$-measurable.
\end{proof}

A similar argument shows that $\Omega^{-1}(\hcF_s) \subset \hcF_s$. To prove $\Omega_*\mu$ is $s$-Markov, let  $E_1 \in \cF_s$ and $E_2 \in \hcF_s$. Claim 1 implies $\Omega^{-1}(E_1) \in \cF_s$ and $\Omega^{-1}(E_2) \in \hcF_s$. Since $\mu$ is $s$-Markov and $\pi_e \Omega = \pi_e$, for any $a\in A$,
\begin{eqnarray*}
\Omega_*\mu(E_1 \cap E_2 | \pi_e = a) &=& \mu(\Omega^{-1}(E_1) \cap \Omega^{-1}(E_2) | \pi_e = a) \\
&=& \mu(\Omega^{-1}(E_1)  | \pi_e = a) \mu( \Omega^{-1}(E_2) | \pi_e = a) \\
&=& \Omega_*\mu(E_1|\pi_e = a) \Omega_*\mu(E_2|\pi_e = a).
\end{eqnarray*}
Since $E_1,E_2,a$ are arbitrary this shows $\Omega_*\mu$ is $s$-Markov.
\end{proof}

\begin{lem}\label{lem:basic3}
Let $\ast, \omega,\Omega$ be as in Lemma \ref{lem:basic1}. Suppose there is another function $\hat{\tau}:(S\cup S^{-1}) \times A^\F \to \F$ satisfying
$$\hat{\tau}(s^{-1}, \hat{\tau}(s,x) \cdot x)  = \hat{\tau}(s,x)^{-1}$$
for every $s\in S\cup S^{-1}$. Let $\star: \F \times A^\F \to \F$, $\homega:\F \times A^\F \to \F, \hat{\Omega}:A^\F \to A^\F$ denote the associated action, cocycle and map as in Lemma \ref{lem:basic1}. Suppose as well that $\omega(\homega(s,\Omega x),x)=s$ for every $s\in S\cup S^{-1}$. Then 
\begin{eqnarray}\label{inverse00}
\omega(\homega(g,\Omega x),x)=g ~\forall g\in \F
\end{eqnarray}
and $\hat{\Omega}\Omega x = x$ for all $x$.
\end{lem}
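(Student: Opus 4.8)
The plan is to prove the two conclusions of Lemma \ref{lem:basic3} — equation \eqref{inverse00} and $\hat\Omega\Omega x = x$ — one after the other, with the first doing essentially all the work and the second following by a direct unwinding of definitions.

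\textbf{Step 1: induction on $|g|$ for \eqref{inverse00}.} The base case $|g|\le 1$ (i.e.\ $g=e$ or $g\in S\cup S^{-1}$) is exactly the hypothesis $\omega(\homega(s,\Omega x),x)=s$, together with the trivial case $g=e$. For the inductive step write a reduced word $g = sg'$ with $s\in S\cup S^{-1}$ and $|g'|=|g|-1$, so that $\homega(g,\Omega x) = \homega(s, g'\star \Omega x)\,\homega(g',\Omega x)$ by the cocycle equation for $\homega$. Now apply $\omega(-,x)$ and use the cocycle equation for $\omega$:
\[
\omega\big(\homega(g,\Omega x),x\big) = \omega\big(\homega(s,g'\star\Omega x),\ \homega(g',\Omega x)\cdot x\big)\cdot \omega\big(\homega(g',\Omega x),x\big).
\]
By the inductive hypothesis the second factor is $g'$, and (again by Lemma \ref{lem:basic1}, applied to the $\star$-action) $\homega(g',\Omega x)\cdot x = g'\star\Omega x$, since $\hat\Omega$-data is built from the same recipe; wait — more carefully, $\homega(g',\Omega x)\cdot(\Omega x) = g'\star(\Omega x)$, so the base point of the first $\omega$ is $g'\star\Omega x$ only if we feed $\Omega x$ rather than $x$. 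This is the subtle point (see below). Granting the identification, the first factor becomes $\omega(\homega(s, g'\star\Omega x), \,\cdot\,)$ evaluated at the appropriate base point, which the base case identifies with $s$, giving $s g' = g$ as desired.

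\textbf{Step 2: the base-point bookkeeping is the real obstacle.} The hypothesis $\omega(\homega(s,\Omega x),x)=s$ is stated with $\Omega x$ inside $\homega$ and $x$ inside $\omega$. To run the induction I need the analogous statement with $\Omega x$ replaced by $g'\star\Omega x$ and $x$ replaced by $\homega(g',\Omega x)\cdot x$. The key observation that makes this go through is the equivariance $g\cdot\Omega x = \Omega(g\ast x)$ from Lemma \ref{lem:basic1} together with its $\hat{}$-analogue $g\cdot\hat\Omega y = \hat\Omega(g\star y)$: applying the hypothesis not at $x$ but at a translate $\omega(g',\cdot)$-shifted point, and using that $\homega(g',\Omega x)\cdot x$ is itself of the form (something) $\ast$-acting, lets me re-index. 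I expect this step — getting the two cocycle equations to line up at matching base points — to be where all the care is needed; everything else is formal.

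\textbf{Step 3: deduce $\hat\Omega\Omega x = x$.} Once \eqref{inverse00} holds, compute coordinate-wise: $(\hat\Omega\Omega x)_h = (\Omega x)_{\homega(h,\Omega x)} = x_{\omega(\homega(h,\Omega x),x)} = x_h$ by \eqref{inverse00}. Here the first equality is the definition of $\hat\Omega$ applied to $\Omega x$, the second is the definition of $\Omega$, and the third is \eqref{inverse00} with $g=h$. Since $h$ was arbitrary, $\hat\Omega\Omega x = x$. I would present Step 1/2 as a single induction and Step 3 as a one-line corollary.
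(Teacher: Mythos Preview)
Your strategy is exactly the paper's: prove \eqref{inverse00} by showing it is multiplicative in $g$ (you induct on $|g|$ with $g=sg'$; the paper shows directly that if $g_1,g_2$ satisfy \eqref{inverse00} for all $x$ then so does $g_1g_2$), and then deduce $\hat\Omega\Omega=\mathrm{id}$ coordinate-wise exactly as in your Step~3.

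The muddle in Steps~1--2 comes from a slip in applying the cocycle equation. In your displayed formula you write the base point of the first $\omega$-factor as $\homega(g',\Omega x)\cdot x$, but the cocycle equation from Lemma~\ref{lem:basic1} is $\omega(ab,x)=\omega(a,b\ast x)\,\omega(b,x)$, so the correct base point is $y:=\homega(g',\Omega x)\ast x$. Once this is written correctly, the ``subtle point'' dissolves: by $(\ast,\cdot)$-equivariance of $\Omega$ and the identity $g\star z=\homega(g,z)\cdot z$ (both from Lemma~\ref{lem:basic1}),
\[
\Omega y \;=\; \Omega\bigl(\homega(g',\Omega x)\ast x\bigr) \;=\; \homega(g',\Omega x)\cdot \Omega x \;=\; g'\star \Omega x,
\]
so the first factor becomes $\omega(\homega(s,\Omega y),y)=s$ by the hypothesis applied at $y$, and the induction closes. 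The $\hat\Omega$-equivariance you invoke in Step~2 (the ``$\hat{}$-analogue'') is not needed anywhere.
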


\begin{proof}
To prove the first claim, it suffices to prove: if $g_1,g_2 \in\F$ satisfy (\ref{inverse00}) for all $x\in A^\F$ then the product $g_1g_2$ also satisfies  (\ref{inverse00}). This follows from the cocycle equations 
\begin{eqnarray*}
\omega(\homega(g_1g_2,\Omega x), x) &=& \omega(\homega(g_1,g_2 \star \Omega x) \homega(g_2,\Omega x), x) \\
&=& \omega(\homega(g_1,g_2 \star \Omega x), \homega(g_2,\Omega x)\ast x)  \omega(\homega(g_2,\Omega x), x)
\end{eqnarray*}
Since $\omega(\homega(g_2,\Omega x), x)=g_2$ it suffices to show  $\omega(\homega(g_1,g_2 \star \Omega x), \homega(g_2,\Omega x)\ast x)=g_1$. This will follow from the assumption that $g_1$ satisfies (\ref{inverse00}) once we show that 
$$g_2 \star \Omega x = \Omega(\homega(g_2,\Omega x)\ast x).$$ 
This follows from Lemma \ref{lem:basic1} and since $\Omega$ is $(\ast,\cdot)$-equivariant:
$$g_2 \star \Omega x = \homega(g_2, \Omega x) \cdot (\Omega x) = \Omega( \homega(g_2,\Omega x) \ast x).$$
The proves the first claim. To prove the last, let $x\in A^\F$ and $h\in \F$. Then
\begin{eqnarray*}
(\hat{\Omega} \Omega x)_h &=& (\Omega x)_{\homega(h,\Omega x)} = x_{\omega(\homega(h,\Omega x),x)} = x_h.
\end{eqnarray*}
\end{proof}

\section{From properly ergodic to generator-ergodic}

The main result of this section is:
\begin{prop}\label{prop:reduction}
Let $\mu \in \Prob_\F(A^\F)$ be properly ergodic and Markov. Then there exists a countable set $B$ and a shift-invariant measure $\rho \in \Prob_\F(B^\F)$ such that $\F \cc (A^\F,\mu)$ is OE to $\F \cc (B^\F,\rho)$, $\rho$ is Markov and for every $s\in S$, $\rho_s$ is essentially free and ergodic.
\end{prop}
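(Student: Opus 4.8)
The plan is to build $\rho$ from $\mu$ by a finite sequence of orbit-equivalences, each one "repairing" the symbolic restriction to one generator $s\in S$ while not destroying the repairs already made to the other generators. By Lemma \ref{lem:reduction2}, for $\mu$ Markov with $\pi_*\mu$ of full support (which we may assume after passing to the support of $\pi_*\mu$, or by a preliminary relabelling of the alphabet), $\mu_s$ fails to be ergodic or essentially free exactly when $\cR^\mu_s \neq A\times A$ or some $\cR^\mu_s$-class is periodic. Since $\mu$ is \emph{properly} ergodic, Lemma \ref{lem:reduction2} guarantees that for at least one $s_0\in S$ some $\cR^{\mu}_{s_0}$-class is aperiodic; this is the seed of "enough randomness" that we will spread to every generator.

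The core is a one-generator lemma: given a Markov $\mu$ on $A^\F$ and a fixed $s\in S$, construct a function $\tau:(S\cup S^{-1})\times A^\F\to\F$ as in Lemma \ref{lem:basic1} which (i) only ever outputs powers of $s$ when its first argument is $s^{\pm1}$, and outputs $e$ otherwise, so that the new action differs from the old only "along $s$-lines", (ii) satisfies $g\in\past(r)\Leftrightarrow\omega(g,x)\in\past(r)$ for every $r\in S$, so that by Lemma \ref{lem:basic2} the new measure $\Omega_*\mu$ is still $r$-Markov for every $r$ — in particular Markov — and its symbolic restrictions $(\Omega_*\mu)_r$ for $r\neq s$ are \emph{unchanged}, and (iii) makes $(\Omega_*\mu)_s$ ergodic and essentially free. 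The point of (i)–(ii) is that the orbit-equivalence is "local to the $s$-direction", realised as an orbit-equivalence of the $\Z$-action $\Z\cc (A^\Z,\mu_s)$ reading only the $s$-line coordinates (plus, for measurability of $\tau$, finitely many neighbouring coordinates hanging off that line), promoted to $A^\F$ via Lemma \ref{lem:helper0}; invertibility of $\Omega$ comes from exhibiting the matching $\hat\tau$ and invoking Lemma \ref{lem:basic3}. For (iii), the relevant move is the "edge-sliding" construction alluded to in the introduction: when $\cR^\mu_s\ne A\times A$ or a class is periodic, one reroutes $s$-edges so as to merge $\cR_s$-classes and inject local aperiodicity — drawing on the aperiodic class for $s_0$ that properly-ergodicity hands us — without altering the one-dimensional marginal $\pi_*\mu$ or any transition data transverse to $s$. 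Concretely this is a $\Z$-equivariant recoding on $A^\Z$ (possibly enlarging the alphabet to a countable set $B$ to record the bookkeeping of the slides), which is where the "countable set $B$" in the statement enters.

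Granting the one-generator lemma, I would finish by induction on $S=\{s_1,\dots,s_k\}$: apply it at $s_1$ to get $\rho^{(1)}$ Markov, OE to $\mu$, with $\rho^{(1)}_{s_1}$ ergodic and essentially free; then apply it at $s_2$ to $\rho^{(1)}$ — since that move leaves $\rho^{(1)}_{s_1}$ untouched, $\rho^{(2)}_{s_1}$ is still ergodic and essentially free, and now $\rho^{(2)}_{s_2}$ is too; continue. There is one subtlety to check: the one-generator lemma at step $j$ needs "enough randomness" to inject aperiodicity into the $s_j$-line, and I must verify that this is still available after steps $1,\dots,j-1$ — but those steps only enlarged $\cR$-classes and added aperiodicity, so proper ergodicity of the current measure is preserved (indeed each $\rho^{(i)}$ is OE to $\mu$, hence properly ergodic), and Lemma \ref{lem:reduction2} again supplies an aperiodic class somewhere; if that class happens to be for $s_j$ itself we are already done at that step, and otherwise the transverse randomness can be transported along a path in the Cayley graph into the $s_j$-direction during the edge-slide. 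After $k$ steps, $\rho:=\rho^{(k)}$ is Markov, OE to $\mu$, and $\rho_s$ is ergodic and essentially free for every $s\in S$.

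**Main obstacle.** The delicate point is step (iii) of the one-generator lemma — engineering the $\Z$-side recoding/edge-slide so that it \emph{simultaneously} (a) merges all $\cR^\mu_s$-classes into one, (b) destroys periodicity of every resulting class, (c) preserves the $0$-marginal $\pi_*\mu$ exactly (needed to invoke Lemma \ref{lem:helper0}), (d) is realised by a genuine orbit-equivalence, i.e. comes from a $\tau$ with an inverse $\hat\tau$ satisfying the compatibility hypothesis of Lemma \ref{lem:basic3}, and (e) keeps $\omega$ confined to $\past(r)$ for each $r$ so Lemma \ref{lem:basic2} applies. Balancing (c) against (a)–(b) is what forces the passage to a larger alphabet $B$ and is the technical heart of the argument; everything else is assembling the already-stated lemmas.
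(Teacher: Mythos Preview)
Your one-generator lemma cannot exist as stated: condition (i) --- that $\tau(s,x)$ is a power of $s$ while $\tau(r,x)=r$ for $r\ne s^{\pm1}$ --- forces $\omega(s^k,x)=s^{m_k(x)}$ for some integers $m_k$, so that $(\Omega x)_{s^k}=x_{s^{m_k(x)}}$. The $s$-line of $\Omega x$ is thus a reparametrised subsequence of the $s$-line of $x$; since consecutive entries of the latter lie in $E^\mu_s$, every pair $\big((\Omega x)_e,(\Omega x)_s\big)=(x_e,x_{s^{m_1}})$ already lies in $\cR^\mu_s$. Hence $E^{\Omega_*\mu}_s\subset\cR^\mu_s$ and therefore $\cR^{\Omega_*\mu}_s\subset\cR^\mu_s$: you can only shrink the $s$-relation, never enlarge it. Enlarging the alphabet does not help, because under (i) the new $s$-line still reads only values from the old $s$-line regardless of what bookkeeping is attached. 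Another way to see the obstruction: you describe the move as ``an orbit-equivalence of the $\Z$-action $\Z\cc(A^\Z,\mu_s)$'', but ergodicity and essential freeness are OE invariants of $\Z$-actions, so no such OE can repair a non-ergodic or periodic $\mu_s$. Your later remark about transporting transverse randomness into the $s_j$-direction is the correct instinct, but it is flatly incompatible with (i).

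The paper's edge-slide is genuinely cross-generator. To enlarge $\cR_t$ one fixes a \emph{different} generator $u$ that possesses an aperiodic $\cR^\mu_u$-class, chooses a ``$\mu_u$-special'' subset $\cE\subset E^\mu_u$, and sets $\tau(t,x)\in\{t,\,ut,\,u^{-1}t\}$ according to whether a certain $\cE$-pattern appears near $x_t,x_{ut}$. Now $(\Omega x)_t$ can equal $x_{ut}$, a value \emph{off} the $t$-line of $x$, and this is exactly what inserts the new edge $(x_e,x_{ut})$ into $E^\rho_t$ linking distinct $\cR^\mu_t$-classes. Because $\tau(t,\cdot)$ involves $u$, your condition (ii) necessarily fails for $r\in\{u,u^{-1},t^{-1}\}$; the paper verifies the $\past$-preservation of Lemma~\ref{lem:basic2} only for the remaining $r$, and recovers the $u$-Markov property separately from $\rho_u=\mu_u$ via Lemma~\ref{lem:helper0}. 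Iterating this key lemma --- first to spread the single aperiodic class to every generator, then to spread each generator's spanning tree to every other --- yields $\cR^\rho_s=A\times A$ with every class aperiodic, all without leaving the alphabet $A$; the obstacle you flag (preserving $\pi_*\mu$ while merging classes) never arises.
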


To prove this result, we will construct a very specific kind of orbit-equivalence which we then apply multiple times with slightly varying hypotheses. The orbit-equivalence we build depends on a choice of a subset $\cE \subset E^\mu_u$ (where $u\in S$ and $E^\mu_u$ is as in Definition \ref{defn:E}) satisfying some technical conditions described next.

\begin{defn}
Let $\mu \in \Prob_\F(A^\F)$ and $s\in S$. A subset $\cE\subset E^\mu_s$ is said to be {\bf $\mu_s$-special} if 
\begin{enumerate}
\item for every $a\in A$ there does not exist $b,c \in A$ such that both $(a,b)\in \cE$ and $(c,a)\in \cE$, and
\item if $(a,b)\in \cE$ then the $\cR^\mu_s$ classes of $a$ and $b$ are aperiodic.
\end{enumerate}

\end{defn}

The next result is the key lemma towards proving Proposition \ref{prop:reduction}. We will apply it multiple times to obtain Proposition \ref{prop:reduction}. The reader who is only interested in the special case in which the alphabet $A$ is finite can assume that $\cE=\{(a,b)\}$ is a singleton. 

\begin{lem}\label{lem:key0}
Let $\mu \in \Prob_\F(A^\F)$ be Markov, properly ergodic such that $\pi_*\mu \in \Prob(A)$ is fully supported. Let $u,t \in S$ be distinct and let $\cE$ be $\mu_u$-special.  Then there exists a Markov measure $\rho \in \Prob_\F(A^\F)$ such that 
\begin{itemize}
\item $\F \cc (A^\F,\mu)$ is OE to $\F \cc (A^\F,\rho)$, 
\item $\mu_s=\rho_s$ for all $s \in S\setminus \{t\}$,
\item $\cR^\rho_t \supset \cE \cup \cR^\mu_t$,
\item for every $(a,b)\in \cE$, the $\cR^\rho_t$-classes of $a$ and $b$ are aperiodic.
\end{itemize}
\end{lem}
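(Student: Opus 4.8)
\textbf{Proof plan for Lemma \ref{lem:key0}.}

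The plan is to build an explicit ``edge-sliding'' orbit-equivalence governed by the set $\cE$. The idea: in the left-Cayley graph, the $t$-edges and $u$-edges at a vertex $g$ are distinct; I want to modify the $\ast$-action along the $t$-direction by occasionally re-routing a $t$-step through a $u$-step, triggered exactly when the $u$-edge at the current vertex carries a label-pair from $\cE$. Concretely, I would define $\tau:(S\cup S^{-1})\times A^\F \to \F$ by setting $\tau(s,x)=s$ for all $s\in S\cup S^{-1}$ with $s\notin\{t,t^{-1}\}$, and defining $\tau(t,x)$ (and $\tau(t^{-1},x)$ consistently, so as to satisfy the involution identity $\tau(s^{-1},\tau(s,x)\cdot x)=\tau(s,x)^{-1}$) to equal $t$ unless the pair $(x_e,x_u)$, or perhaps $(x_{t^{-1}},x_e)$ read appropriately, lies in $\cE$, in which case $\tau(t,x)$ is a word of the form $t u^{\pm 1}$ or $u^{\pm 1} t$ that slides past the $\cE$-decorated $u$-edge. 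The precise recipe must be chosen so that (i) the involution identity of Lemma \ref{lem:basic1} holds, so that we genuinely get an action $\ast$ with cocycle $\omega$ and map $\Omega$; (ii) condition \eqref{inverse00}-type invertibility holds via a partner function $\hat\tau$, so $\Omega$ is a bijection (here condition (1) in the definition of $\mu_u$-special, that no $a$ is simultaneously a head and a tail of an $\cE$-edge, is exactly what prevents ambiguity in deciding whether/where to slide, hence makes $\hat\tau$ well-defined); and (iii) for every $s\in S\setminus\{t\}$ we have $g\in\past(s)\Leftrightarrow \omega(g,x)\in\past(s)$, so by Lemma \ref{lem:basic2} the measure $\rho:=\Omega_*\mu$ is $s$-Markov for all $s\neq t$, and $\rho_s=\mu_s$ for those $s$ since the $s$-direction of the graph is untouched.

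With $\rho=\Omega_*\mu$ in hand, set $\F\cc(A^\F,\rho)$ as the new action; since $\Omega$ conjugates $(\F,\ast)$ to $(\F,\cdot)$ and $\ast$ has the same orbits as $\cdot$ (each $\tau(s,x)\cdot x$ lies in the $\cdot$-orbit of $x$), we get $\F\cc(A^\F,\mu)$ OE to $\F\cc(A^\F,\rho)$. For the Markov property of $\rho$ itself (not just $s$-Markov for $s\neq t$), I would invoke Lemma \ref{lem:helper0}: $\rho$ is $s$-Markov for all $s\in S\setminus\{t\}$ and all the $\pi_*\rho_s=\pi_*\mu$ agree, so it suffices to check that $\rho_t$ is Markov (as a measure on $A^\Z$). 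Computing $\rho_t=R^t_*\Omega_*\mu$: along a bi-infinite $t$-line, $\Omega$ reads off a sequence obtained from the $\mu$-Markov $t$-line by the local sliding rule, which inserts, at sites where an $\cE$-edge is seen on the $u$-branch, one extra symbol coming from that $u$-branch. Because $\mu$ is Markov, the joint law of the $t$-line together with the attached $u$-symbols is itself a (countable-state) Markov chain on an enlarged alphabet, and its pushforward under a local finite-window factor map is Markov here because the window has a specific one-step structure — this is the computation I would actually carry out, essentially the same style of bookkeeping as in Lemmas \ref{lem:basic-Markov} and \ref{lem:helper0}.

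Finally, the conditions on $\cR^\rho_t$: by construction, whenever a site sees an $\cE$-edge $(a,b)$ on the $u$-branch, the new $t$-line exhibits a transition that places $a$ and $b$ adjacent (the inserted symbol is the $u$-successor), so $(a,b)\in E^\rho_t$, whence $\cE\subset\cR^\rho_t$; and since $\rho_t$ still realizes every old $t$-transition of $\mu$ (the slide only adds, never deletes, transitions — I should make sure the recipe indeed keeps a copy of the un-slid $t$-step available, which is why the $\cE$-trigger is a positive-probability but not almost-sure event, using fullness of support of $\pi_*\mu$ and positivity of the $\cE$-pairs), we get $\cR^\rho_t\supset\cR^\mu_t$. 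Aperiodicity of the $\cR^\rho_t$-class of each $a$ with $(a,b)\in\cE$: condition (2) of $\mu_u$-special says $a$ (and $b$) already lie in aperiodic $\cR^\mu_u$-classes, i.e. some vertex in that class has in- or out-degree $\geq 2$ in $(A,E^\mu_u)$; I must transport this to $E^\rho_t$ via the merging just established, checking that adding the $\cE$-edges cannot accidentally ``regularize'' the class — again using condition (1), which keeps the $\cE$-edges from forming a perfect matching overlay. The main obstacle I anticipate is item (ii): writing down $\hat\tau$ and verifying $\omega(\homega(s,\Omega x),x)=s$ for the generators, i.e. proving $\Omega$ is invertible, because the sliding rule changes the combinatorial position at which the $\cE$-trigger is read, so one must argue carefully that from $\Omega x$ one can unambiguously detect and undo each slide — this is where the $\mu_u$-special hypothesis does its real work and where a naive construction would fail.
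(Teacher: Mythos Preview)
Your overall architecture matches the paper's, but there are two concrete gaps.

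First, a trigger of the form ``slide when $(x_e,x_u)\in\cE$'' (or any variant that looks only at the $\cE$-pair) can be conditionally deterministic: if $a$ has $u$-out-degree $1$ in $(A,E^\mu_u)$ and its unique successor $b$ satisfies $(a,b)\in\cE$, then the slide fires \emph{every} time the $t$-step lands at $a$, and the old transition $(\alpha,a)$ is erased from $E^\rho_t$, destroying $\cR^\mu_t\subset\cR^\rho_t$. Full support of $\pi_*\mu$ does not help here. The missing idea is that condition~(2) of ``$\mu_u$-special'' must be used in the \emph{construction}, not only in the conclusion: since the $\cR^\mu_u$-class of $b$ is aperiodic, one walks along the $u$-ray from $b$ to the first branching vertex and fixes one branch $\eta(b)$; the trigger then additionally requires a further $u$-coordinate to equal $\eta(b)$, an event of conditional probability strictly between $0$ and $1$. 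This simultaneously preserves the old $(\alpha,a)$ edges, creates the new $(\alpha,b)$ edges, and yields aperiodicity of the $\cR^\rho_t$-class directly ($\alpha$ acquires $t$-out-degree $\ge 2$). With this richer trigger one also gets $\Omega\circ\Omega=\mathrm{id}$, so no separate $\hat\tau$ is needed.

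Second, your claim~(iii) fails for $s=u$: if $\tau(t^{-1},u\cdot x)=(ut)^{-1}$ then $\omega(t^{-1}u,x)=t^{-1}u^{-1}\cdot u=t^{-1}\notin\past(u)$, although $t^{-1}u\in\past(u)$. What \emph{does} hold is past-preservation for every $s\in S\setminus\{u\}$, \emph{including} $s=t$ (since $\omega(t,x)\in\{t,ut,u^{-1}t\}\subset\past(t)$, and an induction propagates this). Hence Lemma~\ref{lem:basic2} gives $s$-Markov for $s\ne u$, and Lemma~\ref{lem:helper0} should be applied with $u$, not $t$, as the exceptional generator --- using that $\rho_u=\mu_u$ is already Markov. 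Your proposed direct verification that $\rho_t$ is Markov is then unnecessary, and also harder than you suggest: the $t$-line of $\Omega x$ reads $x$ at tree positions of the form $u^{\epsilon_n}t\cdots u^{\epsilon_1}t$, not along a single axis with finite-window decorations.
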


\begin{proof}[Proof of Lemma \ref{lem:key0}]
Let $(a,b) \in \cE$. Because the $\cR^\mu_u$-class of $b$ is aperiodic, there exist a smallest number $n=n(b)>0$, elements $b_0,\ldots, b_n \in A$ and $\eta(b) \in A$ such that
\begin{itemize}
\item $b=b_0$,
\item $(b_i,b_{i+1}) \in E^\mu_u$ for all $0\le i < n$
\item $b_n \ne \eta(b)$ and $(b_{n-1},\eta(b)) \in E^\mu_u$. 
\end{itemize}
Choose a function $\eta$ satisfing the above. Let
$$\cF=\big\{ (a,b,\eta(b)):~ (a,b) \in \cE\big\}.$$
Let
$$F(x) := (x_{u^{-1}}, x_e, x_{u^{n(x_e)}})$$
whenever $(x_{u^{-1}}, x_e) \in \cE$.

Define $\tau:(S\cup S^{-1}) \times A^\F \to \F$ by
$$\tau(s,x) = s  \quad \forall s\in S\cup S^{-1}\setminus \{t,t^{-1}\},$$
\begin{displaymath}
\tau(t,x) = \left \{ \begin{array}{cc}
ut& \textrm{ if } F(ut\cdot x) \in \cF  \\  
u^{-1}t & \textrm{ if } F(t\cdot x) \in \cF  \\  
t & \textrm{ otherwise } \end{array}\right.\end{displaymath}

\begin{displaymath}
\tau(t^{-1},x) = \left \{ \begin{array}{cc}
(ut)^{-1} & \textrm{ if } F(x) \in \cF \\  
(u^{-1}t)^{-1} & \textrm{ if } F(u\cdot x) \in \cF \\  
t^{-1} & \textrm{ otherwise } \end{array}\right.\end{displaymath}
For example, $F(ut\cdot x)\in \cF$ means that $F(ut \cdot x)$ is well-defined and $F(ut\cdot x)\in \cF$.

Because $\cE$ is $\mu_u$-special, $\tau$ is well-defined (for example, it cannot be that $F(ut \cdot x)\in \cF$ and $F(t\cdot x) \in \cF$). Also $\tau$ satisfies the hypotheses of Lemma \ref{lem:basic1}. Let $\ast,\omega,\Omega$ be as in Lemma \ref{lem:basic1}.

We will show that $\F \cc^\ast (A^\F,\mu)$ has the same orbits as $\F \cc (A^\F,\mu)$ (modulo $\mu$ null sets) and $\F \cc^\ast (A^\F,\mu)$ is measurably-conjugate to an action of the form  $\F \cc (A^\F,\rho)$ where $\rho$ satisfies the conclusion. 

\noindent {\bf Claim 1}. For $\mu$-a.e. $x$, $\F \cdot x = \F \ast x$.

\begin{proof}
It is immediate that $\F \cdot x \supset \F \ast x$ for a.e. $x$. To show the opposite inclusion, it suffices to show that for a.e. $x \in A^\F$ and every $g \in \F$ there exists $h \in \F$ such that $\omega(h,x)=g$. Because of the cocycle equation, it suffices to prove this for $g\in S \cup S^{-1}$. The special case of $g \in S\cup S^{-1} \setminus \{t,t^{-1}\}$ is clear since in that case $\omega(g,x)=g$. 

We claim:
\begin{eqnarray*}
F(ut\cdot x) \in \cF &\Rightarrow & \omega(u^{-1}t,x)=  t  \label{thing1} \\
F(t\cdot x) \in \cF &\Rightarrow & \omega(ut,x)=  t  \label{thing2} \\
F(ut\cdot x) \notin \cF \wedge F(t\cdot x) \notin \cF &\Rightarrow & \omega(t,x)=  t. \label{thing3}
\end{eqnarray*}
To see the first equation, assume $F(ut\cdot x) \in \cF $. By the cocycle equation
$$ \omega(ut,x) = \omega(u^{-1},t\ast x) \omega(t,x) = u^{-1}(ut)=t.$$
The other cases are similar. This shows for a.e. $x\in A^\F$, there exists $h$ such that $\omega(h,x)=t$. The statement with $t^{-1}$ in place of $t$ is similar.
\end{proof}

\noindent {\bf Claim 2}. For any $x\in A^\F$ and $g\in \F$,
$$\omega(\omega(g,\Omega x), x) = g$$
and $\Omega (\Omega x) =x$.

\begin{proof}
The first claim is immediate for $g \in S\cup S^{-1} \setminus \{t,t^{-1}\}$. To handle the case $g=t$, suppose that $F(ut\cdot x) \in \cF $. Then $\omega(t,x)=ut$ and for any $m\in \Z$,
$$\omega(u^mt,x) = \omega(u^m, t\ast x) \omega(t,x) = u^m(ut)=u^{m+1}t.$$ 
So
$$(t \cdot \Omega x)_{u^m}= (\Omega x)_{u^mt} = x_{\omega(u^mt,x)} = x_{u^{m+1}t}.$$
Let $n = n(x_{ut})=n((t\cdot \Omega x)_e)$. Then
$$F(t\cdot \Omega x) = ( (t\cdot \Omega x)_{u^{-1}}, (t\cdot \Omega x)_e, (t\cdot \Omega x)_{u^n}) = (x_t,x_{ut}, x_{u^{n+1}t}) = F(ut\cdot x)\in \cF.$$
So $\omega(t,\Omega x)=u^{-1}t$ and
$$\omega(\omega(t,\Omega x), x)  = \omega(u^{-1}t,x) = t.$$
The other cases are similar. Claim 2 now follows from Lemma \ref{lem:basic3}. 
 
 \end{proof}

It now suffices to prove that if $\rho:=\Omega_*\mu$ then $\rho$ satisfies the conclusions of this lemma. Claims 1 and 2 show that $\F \cc (A^\F,\mu)$ is OE to $\F \cc (A^\F,\rho)$. The next three claims show that $\rho$ is Markov.

\noindent {\bf Claim 3}. If $s\in S\cup S^{-1} \setminus \{t^{-1},u, u^{-1}\}$ and $g \in \past(s)$ then $\omega(g,x) \in \past(s)$ for all $x\in A^\F$.

\begin{proof}
We prove this by induction on $|g|$. If $|g|=1$ then $g=s$ and either $s \ne t$ in which case $\omega(g,x)=g \in \past(s)$, or $s=t$ and $\omega(g,x) \in \{t,ut,u^{-1}t\} \subset \past(s)$. 

So assume $|g|>1$. Then we can write $g=hk$ for some $h \in S\cup S^{-1}, k \in \past(s)$ such that $|k| < |g|$. By induction we can assume $\omega(k,x) \in \past(s)$. Then
$$\omega(hk,x) = \omega(h, k\ast x)\omega(k,x).$$
To obtain a contradiction, suppose that $\omega(hk,x) \notin \past(s)$. 

Since $h \in S\cup S^{-1}$, $|\omega(h,k\ast x)| \in \{1,2\}$. If $|\omega(h,k\ast x)|=1$ then since $f \past(s) \subset \past(s)$ for all $f \in S \cup S^{-1} \setminus \{s^{-1}\}$ it must be that $\omega(h,k\ast x) = s^{-1}$. Since $s^{-1} \past(s) = \{e\} \cup \past(s)$, it must be that $\omega(hk,x)=e$ (since $\omega(\cdot, x)$ is injective) which implies $hk=g=e$, contradicting that $g\in \past(s)$. 

So suppose $|\omega(h,k\ast x)|=2$. Since $|h|=1$ this implies  $h \in \{t,t^{-1}\}$ and $\omega(h,k\ast x) \in \{ ut,u^{-1}t, (ut)^{-1}, (u^{-1}t)^{-1}\}$. If $f \in \F$ is any element with $|f|=2$ then
$$f \past(s) \subset \past(s) \cup \{e\} \cup S \cup S^{-1}.$$
If $\omega(h,k\ast x)\omega(k,x)=e$ then $hk=g=e$, contradicting that $g\in \past(s)$. So we may assume that 
\begin{eqnarray}\label{obvious}
\omega(h,k\ast x)\omega(k,x)\in S \cup S^{-1} \setminus \past(s).
\end{eqnarray}
Since $\omega(h,k\ast x) \in \{ ut,u^{-1}t, (ut)^{-1}, (u^{-1}t)^{-1}\}$ and $\omega(k,x) \in \past(s)$, this implies that $s\in \{u,u^{-1},t,t^{-1}\}$. By assumption $s\notin \{t^{-1},u, u^{-1}\}$. So $s=t$ and $\omega(k,x) \in \{t,ut,u^{-1}t\}$. Thus
$$\omega(g,x)=\omega(h,k\ast x)\omega(k,x) \in \{ ut,u^{-1}t, (ut)^{-1}, (u^{-1}t)^{-1}\} \{t,ut,u^{-1}t\}  \subset \past(t) \cup \{e\}.$$
Since we are assuming $\omega(g,x) \notin \past(t)$, this implies $\omega(g,x)=e$. Since $\omega(\cdot, x)$ is injective, this implies $g \in \{u,u^{-1}\}$, contradicting that $g\in \past(t)$. 

\end{proof}

\noindent {\bf Claim 4}. If $s\in S\cup S^{-1} \setminus \{t^{-1},u, u^{-1}\}$ and $g \notin \past(s)$ then $\omega(g,x) \notin \past(s)$ for all $x\in A^\F$.

\begin{proof}
To obtain a contradiction, suppose there exists $g\notin \past(s)$ and $y \in A^\F$ such that $\omega(g,y) \in \past(s)$. Let $\Omega y = x$. By Claim 2, $\Omega x = y$ and
$$ \omega(\omega(g, y), x) = \omega(\omega(g, \Omega x), x)  = g.$$
Since $\omega(g,y) \in \past(s)$, Claim 3 implies $\omega(\omega(g, y), x) = g  \in \past(s)$. This contradiction proves Claim 4.
\end{proof}

\noindent {\bf Claim 5}. 
$\rho$ is Markov.

\begin{proof}
It follows from Claims 3 and 4 that if $s \in S \cup S^{-1} \setminus \{t^{-1},u, u^{-1}\}$  and $x\in A^\F$ then
$$g \in \past(s) \Leftrightarrow \omega(g,x) \in \past(s).$$
By Lemma \ref{lem:basic2}, $\rho$ is $s$-Markov for every $s \in S\setminus \{u\}$. Since $\rho_u=\mu_u$ is also Markov, Lemma \ref{lem:helper0} implies $\rho$ is Markov.
\end{proof}

Let $x\in A^\F$, $s\in S \setminus \{t\}$ and $n\in \Z$. Then
$$(\Omega x)_{s^n} = x_{\omega(s^n,x)} =x_{s^n}.$$ 
Thus $R^s\Omega x = R^sx$ which implies $\rho_s = \mu_s$ for all $s\in S \setminus \{t\}$.

\noindent {\bf Claim 6}. 
$\cR^\mu_t \subset \cR^\rho_t$.

\begin{proof}
Let $(\a,\b) \in E^\mu_t$. We will show that $(\a,\b) \in E^\rho_t$. Indeed,
$$\mu\big(\{ x\in A^\F:~x_e = \a, x_t =\b, F(ut\cdot x) \notin \cF \textrm{ and }  F(t\cdot x) \notin \cF\}\big) >0.$$
This is because the event $ut\cdot x \in \cF$ depends only on $x_e, x_t$ and $x_{u^nt}$ where $n=n(x_{ut})$. Since $\mu$ is Markov, the event that $ut\cdot x \in \cF$ given that $x_t = \b$ does not depend on $x_e$. A similar statement hold for the event $F(t\cdot x) \in \cF$. Moreover, because $\cE$ is $\mu_u$-special, depending only on $\b$, one of the events $F(ut\cdot x) \notin \cF$, $F(t\cdot x) \notin \cF$ must occur. 

Now suppose that $x\in A^\F$ satisfies $x_e = \a, x_t =\b, F(ut\cdot x) \notin \cF, \textrm{ and }  F(t\cdot x) \notin \cF$. Then $\omega(t,x)=t$,  $(\Omega x)_e=\a$ and 
$$(\Omega x)_t = x_{\omega(t,x)} = x_t =\b.$$
Thus shows 
$$\Omega^{-1}\big(\{y \in A^\F:~ y_e=\a, y_t= \b\}\big) \supset \{ x\in A^\F:~x_e = \a, x_t =\b, F(ut\cdot x) \notin \cF, \textrm{ and }  F(t\cdot x) \notin \cF\}.$$
Therefore, 
$$\Omega_*\mu\big(\{y\in A^\F:~ y_e=\a, y_t=\b\}\big)>0.$$
Since $\rho=\Omega_*\mu$, this implies $(\a,\b) \in E^\rho_t$. Since $(\a,\b)$ is arbitrary, $E^\mu_t \subset E^\rho_t$. Since $\cR^\mu_t$ is generated by $E^\mu_t$, it follows that $\cR^\mu_t \subset \cR^\rho_t$.
\end{proof}

\noindent {\bf Claim 7}. Let $(a,b) \in \cE$ and let $\a\in A$ be such that $(\a,a) \in E^\mu_t$. Then $(\a,b) \in E^\rho_t$. 

\begin{proof}
 Since $(a,b) \in \cE \subset E^\mu_u$, the Markov property (via Lemma \ref{lem:basic-Markov}) implies
$$\mu(\{x\in A^\F:~x_e = \a, x_t = a, x_{ut}= b\})>0.$$
In fact,
$$\mu(\{x\in A^\F:~x_e = \a, x_t = a, x_{ut}= b, F(ut \cdot x) \in \cF\})>0.$$
This is because the event $F(ut \cdot x) \in \cF$ given $x_t=a, x_{ut}=b$ depends only on $x_{u^{n+1}t}$ where $n=n(b)$.

If $x\in A^\F$ is such that $x_e = \a, x_t = a, x_{ut}= b, F(ut \cdot x) \in \cF$ then $\omega(t,x)=ut$. So
$$(\Omega x)_e = x_e = \a, \quad (\Omega x)_t  = x_{\omega(t,x)} = x_{ut} = b.$$
So
$$\Omega^{-1}(\{y \in A^\F:~ y_e=\a, y_t= b\}) \supset \{x \in A^\F:~ x_e = \a, x_t = a, x_{ut}= b, F(ut \cdot x) \in \cF\}.$$
Therefore
$$\Omega_*\mu(\{y \in A^\F:~ y_e=\a, y_t= b\})>0.$$
This shows $(\a,b) \in E^\rho_t$.  
\end{proof}

It follows from Claim 6 and 7 that $(a,b) \in \cR^\rho_t$ for every $(a,b) \in \cE$. Therefore $\cE \subset \cR^\rho_t$. By Claim 6, $\cE \cup \cR^\mu_t \subset \cR^\rho_t$.

Let $a,b,\a$ be as in Claim 7 and observe that $(\a,a), (\a,b) \in E^\rho_t$ by Claims 6 and 7. Since $a\ne b$ (because $\cE$ is special) the out-degree of $\a$ in the directed graph $(A, E^\rho_t)$ is at least 2. So the $\cR^\rho_t$-class of $\a$ is aperiodic. Since $a$ and $b$ are $\cR^\rho_t$-equivalent to $\a$,  the $\cR^\rho_t$-classes of $a$ and $b$ are aperiodic. Since $(a,b) \in \cE$ is arbitrary, this finishes the lemma.

\end{proof}

\begin{proof}[Proof of Proposition  \ref{prop:reduction}]
Without loss of generality we may assume $\pi_*\mu$ is a fully supported measure on $A$. Because $\mu$ is properly ergodic, Lemma \ref{lem:reduction2} implies there exist $a\in A$ and $u \in S$ such that the $\cR^\mu_u$ class of $a$ is aperiodic. Let $[a]^\mu_u$ denote the $\cR^\mu_u$-class of $a$. Let $\cT_u \subset E^\mu_u$ be a spanning tree of the induced subgraph of $[a]^\mu_u$ in $(A,E^\mu_u)$. Because trees are bi-partitite, there exists a partition $A_0 \sqcup A_1$ of $[a]^\mu_u$ such that 
$$\cT_u \subset (A_0 \times A_1) \cup (A_1 \times A_0).$$
Let 
$$\cE_1 = \cT_u \cap (A_0 \times A_1), \quad \cE_2 = \cT_u \cap (A_1 \times A_0).$$
Then each $\cE_i$ is $\mu_u$-special. After applying Lemma \ref{lem:key0} successively using $\cE_1,\cE_2$ and letting $t$ vary over $S \setminus \{u\}$, we obtain the existence of a Markov measure $\rho \in \Prob_\F(A^\F)$ such that 
\begin{itemize}
\item $\F \cc (A^\F,\mu)$ is OE to $\F \cc (A^\F,\rho)$, 
\item for every $s\in S$, $\cR^\rho_s \supset \cR^\mu_s \cup \cT_u$,
\item for every $s\in S$, the $\cR^\rho_s$-class of $a$ is aperiodic.
\end{itemize}
Thus after replacing $\mu$ with $\rho$ if necessary, we may assume that the $\cR^\mu_s$-class of $a$ is aperiodic for every $s$. 

We can now apply the same argument as above for any $s\in S$ in place of $u$. Thus we obtain the existence of a Markov measure $\rho \in \Prob_\F(A^\F)$ such that 
\begin{enumerate}
\item $\F \cc (A^\F,\mu)$ is OE to $\F \cc (A^\F,\rho)$, 
\item for every $s,u\in S$, $\cR^\rho_s \supset \cR^\mu_s \cup \cT_u$,
\item for every $s\in S$, the $\cR^\rho_s$-class of $a$ is aperiodic.
\end{enumerate}
It follows from item (2) that $\cR^\rho_s \ni (a,b)$ for every $b$ such that there exists some $u\in S$ with $(a,b) \in \cR^\mu_u$. This is because $\cT_u$ generates the $\cR^\mu_u$-class of $a$. However, since $\mu$ is properly ergodic and $\pi_*\mu$ is fully supported, this implies $\cR^\rho_s \ni (a,b)$ for every $b \in A$. So $\cR^\rho_s = A \times A$. Thus $\rho_s$ is ergodic and by (3) essentially free. 
\end{proof}

\section{Proof of the main theorem}

The main theorem is obtained by applying a specific kind of orbit-equivalence to a given Markov system multiple times. This kind of orbit-equivalence does not change the 1-dimensional marginal $\pi_*\mu$ and preserves the Markov property. At the same time, it replaces one of the symbolic restrictions $\mu_t$ with a Bernoulli measure. 

To build these orbit equivalence, we will first need some well-known facts about full groups of measured equivalence relations (Definition \ref{defn:full}, Lemma \ref{lem:kakutani}). We then apply these facts to obtain a slightly enhanced version of Dye's Theorem (Lemma \ref{lem:key}). Then Lemma \ref{lem:main} establishes the specific kind of orbit-equivalence we need to prove the main theorem.

\begin{defn}\label{defn:full}
Recall that $T:A^\Z \to A^\Z$ is defined by $(Tx)_n = x_{n+1}$ and $\Prob_\Z(A^\Z)$ is the space of $T$-invariant Borel probability measures on $A^\Z$. For $\mu \in \Prob_\Z(A^\Z)$ let $[T,\mu]$ denote the {\bf full group} of the orbit-equivalence relation of $T$ modulo $\mu$. To be precise, $[T,\mu]$ consists of all measurable automorphisms $S: X \to X$ (where $X \subset A^\Z$ is $\mu$-conull and $T$-invariant) such that for every $x\in X$ there exists $n\in \Z$ with $Sx=T^nx$. Two such automorphisms are identified if they agree on a $\mu$-conull set.
\end{defn}

\begin{lem}\label{lem:kakutani}
Let $\mu \in \Prob_\Z(A^\Z)$ be ergodic and essentially free. Let $B$ be a finite or countable set and let $\phi:A^\Z \to B, \psi:A^\Z \to B$ be measurable maps with the same pushforward measures (so $\phi_*\mu=\psi_*\mu$). Then there exists $S \in [T,\mu]$ such that $\psi = \phi \circ S$.
\end{lem}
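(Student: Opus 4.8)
The plan is to translate the conclusion into a purely measure-theoretic matching statement about the orbit equivalence relation $\cR_T := \bigcup_{n\in\Z}\{(x,T^nx):x\in A^\Z\}$, and then to prove that statement by a maximal-partial-isomorphism argument powered by ergodicity. First I would set up the reduction. Let $\beta:=\phi_*\mu=\psi_*\mu\in\Prob(B)$ and, for $b\in B$, put $X_b:=\phi^{-1}(b)$ and $Y_b:=\psi^{-1}(b)$. Then $\{X_b\}_{b\in B}$ and $\{Y_b\}_{b\in B}$ are measurable partitions of $A^\Z$ modulo $\mu$, and $\mu(X_b)=\mu(Y_b)=\beta(\{b\})$ for every $b$. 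The key observation is that an $S\in[T,\mu]$ satisfies $\psi=\phi\circ S$ as soon as $S(Y_b)\subseteq X_b$ mod $\mu$ for every $b$: if $\psi(x)=b$ then $x\in Y_b$, so $Sx\in X_b$, hence $\phi(Sx)=b=\psi(x)$. So it suffices, for each $b$ with $\beta(\{b\})>0$, to produce a $\mu$-preserving partial isomorphism $S_b:Y_b\to X_b$ with $(x,S_bx)\in\cR_T$ for a.e.\ $x\in Y_b$; then $S:=\bigsqcup_{b}S_b$ has graph $\bigcup_b\operatorname{graph}(S_b)$, so it is measurable, and since the $Y_b$ partition its domain and the $X_b$ partition its range (mod $\mu$) it is a $\mu$-preserving automorphism of $(A^\Z,\mu)$ with $(x,Sx)\in\cR_T$ a.e.; after shrinking to the $T$-invariant conull subset of $A^\Z$ on which everything is defined, $S\in[T,\mu]$, and $\psi=\phi\circ S$ by construction.

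The heart of the matter is the matching step. Fix $b$, write $X:=X_b$, $Y:=Y_b$, so $\mu(X)=\mu(Y)$, and consider the family $\mathcal{G}$ of all $\mu$-preserving partial isomorphisms $f$ with $\dom(f)\subseteq Y$, $\operatorname{ran}(f)\subseteq X$ and $(x,f(x))\in\cR_T$ for a.e.\ $x$. A standard exhaustion — at each stage adjoining a member of $\mathcal{G}$ disjoint from the current one whose domain has at least half the supremal available mass, then taking a countable union (possible since the domain masses are bounded by $\mu(Y)<\infty$) — yields some $f\in\mathcal{G}$ that is maximal, in the sense that there is no positive-measure $\mu$-preserving partial isomorphism inside $\cR_T$ from a subset of $Y\setminus\dom(f)$ to a subset of $X\setminus\operatorname{ran}(f)$. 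Suppose $\mu(\dom f)<\mu(Y)$. Since $f$ preserves $\mu$ we also have $\mu(\operatorname{ran}f)=\mu(\dom f)<\mu(X)$, so $Y':=Y\setminus\dom f$ and $X':=X\setminus\operatorname{ran}f$ both have positive measure. Because $\cR_T=\bigcup_n\operatorname{graph}(T^n)$ we have $Y'=\bigcup_n(Y'\cap T^{-n}X')$; if every $Y'\cap T^{-n}X'$ were $\mu$-null, then $X'$ would be disjoint mod $\mu$ from the $\cR_T$-saturation $\bigcup_n T^nY'$ of $Y'$, which is $T$-invariant of positive measure and hence conull by ergodicity — a contradiction. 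So $\mu(Y'\cap T^{-n}X')>0$ for some $n$, and $T^n$ restricted to $Y'\cap T^{-n}X'$ is a positive-measure $\mu$-preserving partial isomorphism inside $\cR_T$ from a subset of $Y'$ into $X'$, contradicting maximality of $f$. Hence $\mu(\dom f)=\mu(Y)$ and, symmetrically, $\operatorname{ran}(f)=X$ mod $\mu$, so $S_b:=f$ does the job.

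The only genuinely substantive point is this final contradiction argument, i.e.\ the use of ergodicity to force a maximal partial isomorphism between two equal-measure sets to be total — a form of the classical fact that in an ergodic pmp equivalence relation any two partitions with matching masses are conjugate by a full-group element. Essential freeness is used only to guarantee that $\mu$ is non-atomic (an ergodic free $\Z$-action has no invariant atom), so that the partitions $\{X_b\},\{Y_b\}$ and the pieces $S_b$ behave as expected. Everything else — the exhaustion producing a maximal $f$, the measurability of the assembled $S$, and the bookkeeping over the countably many $b$ — is routine.
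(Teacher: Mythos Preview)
Your argument is correct but organized differently from the paper's. The paper builds $S$ in one global greedy sweep: it fixes an enumeration $\{n_k\}$ of $\Z$ and at step $k$ puts $S=T^{n_k}$ on those $x$ not yet in the domain for which $\psi(x)=\phi(T^{n_k}x)$ and $T^{n_k}x$ is not yet in the range; ergodicity (together with the fiberwise equality $\mu(\phi^{-1}(b))=\mu(\psi^{-1}(b))$) then forces the domain to be conull. You instead decompose first into the fibers $Y_b=\psi^{-1}(b)\to X_b=\phi^{-1}(b)$ and, for each $b$, run an abstract exhaustion to a \emph{maximal} full-groupoid partial isomorphism, invoking ergodicity only at the end to show that maximality forces totality. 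Your route cleanly isolates the underlying principle --- in an ergodic pmp equivalence relation any two sets of equal measure are matched by an element of the full groupoid --- and would apply verbatim with $\cR_T$ replaced by an arbitrary ergodic countable pmp equivalence relation; the paper's route is shorter and produces $S$ by an explicit recursive formula. One small comment: essential freeness plays no real role in your argument --- the maximality-plus-ergodicity step and the fiberwise assembly go through without it --- so your closing remark about non-atomicity, while harmless, is not actually needed.
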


\begin{proof}
This result is well-known but I did not find a suitable reference (it partially generalizes a lemma in \cite{MR0369658}). 

Let $\{n_i\}_{i=1}^\infty = \Z$ be an enumeration of the integers. Let
$$X_1 = \big\{x\in A^\F:~ \psi(x) = \phi( T^{n_1}x) \big\}.$$
Define $S_1:X_1 \to A^\F$ by $S_1(x)=T^{n_1}x$. If $X_k$ and $S_k$ have been defined, let $X_{k+1}$ be the set of all $x\in A^\F \setminus \bigcup_{i=1}^k X_i$ such that 
$$\psi(x)=\phi(T^{n_{k+1}}x) \textrm{ and } T^{n_{k+1}}x \notin \bigcup_{i=1}^k S_i(X_i).$$
Define $X=\cup_k X_k$ and $S:X \to A^\F$ by $Sx = S_kx$ for $x\in X_k$. Because the $X_k$'s are pairwise disjoint, $S$ is well-defined. 

By ergodicity, for each $b\in B$, $\phi^{-1}(b) \subset \cup_{n\in \Z} T^n \psi^{-1}(b)$ modulo $\mu$-null sets. Therefore, $X$ is $\mu$-conull. By design, $\psi = \phi \circ S$. Moreover, since the $S_k(X_k)$'s  are pairwise disjoint and each $S_k$ is injective, $S$ is invertible. This shows $S$ is in the full group $[T,\mu]$.

\end{proof}

\begin{lem}\label{lem:key}
Let $\mu, \nu \in \Prob_\Z(A^\Z)$ be shift-invariant, ergodic, essentially free measures. Let $\pi:A^\Z \to A$ be the time 0 map. Suppose $\pi_*\mu=\pi_*\nu$ (in other words, for every $a\in A$, 
$$\mu\big(\{x\in A^\Z:~x_e=a\}\big) = \nu\big(\{x\in A^\Z:~x_e=a\}\big).$$
Then there exists an orbit equivalence $\Psi:A^\Z \to A^\Z$
from the shift action $\Z \cc^T (A^\Z,\mu)$ to $\Z \cc^T (A^\Z,\nu)$ such that $\pi = \pi \Psi.$
\end{lem}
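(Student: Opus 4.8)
The plan is to combine Dye's theorem (as a black box) with Lemma~\ref{lem:kakutani} in a two-step procedure. First I would invoke Dye's theorem to obtain \emph{some} orbit equivalence $\Psi_0:A^\Z \to A^\Z$ from $\Z \cc^T(A^\Z,\mu)$ to $\Z \cc^T(A^\Z,\nu)$; this exists because both actions are ergodic, essentially free, and $\Z$ is amenable, but $\Psi_0$ need not respect the time-$0$ map. The remaining task is to ``correct'' $\Psi_0$ so that it intertwines $\pi$ with $\pi$.

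\medskip

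Second, I would set $\phi := \pi$ and $\psi := \pi \circ \Psi_0$, both regarded as measurable maps $A^\Z \to A$. Since $\Psi_0$ is an orbit equivalence it satisfies $(\Psi_0)_*\mu = \nu$ (here I am using the measure-preserving normalization; if $\Psi_0$ only matches $\nu$ up to a scalar, it must actually be $1$ since both are probability measures), hence $\psi_*\mu = \pi_*\nu = \pi_*\mu = \phi_*\mu$. Thus $\phi$ and $\psi$ have the same pushforward under $\mu$, and Lemma~\ref{lem:kakutani} (applied with the target set $B = A$) produces $S \in [T,\mu]$ with $\psi = \phi \circ S$, i.e.\ $\pi \circ \Psi_0 = \pi \circ S$. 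Now define $\Psi := \Psi_0 \circ S^{-1}$. Since $S \in [T,\mu]$, composing with $S^{-1}$ does not change orbits, so $\Psi$ is still an orbit equivalence from $\Z \cc^T(A^\Z,\mu)$ to $\Z \cc^T(A^\Z,\nu)$ (it is a bijection mod null sets, sends $\mu$ to $\nu$, and maps $T$-orbits to $T$-orbits in both directions). And $\pi \circ \Psi = \pi \circ \Psi_0 \circ S^{-1} = \pi \circ S \circ S^{-1} = \pi$, as required.

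\medskip

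The only genuine subtlety—really a bookkeeping point rather than an obstacle—is making sure the composition $\Psi = \Psi_0 \circ S^{-1}$ is literally an orbit equivalence of the two systems rather than merely ``orbit equivalence up to the full group.'' This is automatic: an orbit equivalence precomposed with an element of the source full group is again an orbit equivalence, because full-group elements preserve orbits and preserve the measure. One should also note that $S^{-1} \in [T,\mu]$ whenever $S \in [T,\mu]$, which is clear from the definition. Everything else is a direct substitution, so I expect no real difficulty; the conceptual content is entirely carried by Dye's theorem and by Lemma~\ref{lem:kakutani}, both of which are available.
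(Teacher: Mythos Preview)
Your proposal is correct and follows essentially the same approach as the paper: invoke Dye's theorem to get some orbit equivalence, then use Lemma~\ref{lem:kakutani} to correct it by an element of the full group so that the time-$0$ map is preserved. The only cosmetic difference is that the paper swaps the roles of $\phi$ and $\psi$ in the application of Lemma~\ref{lem:kakutani}, obtaining $S$ with $\pi = \pi \Psi' S$ directly and setting $\Psi = \Psi' S$, whereas you arrive at $\Psi = \Psi_0 S^{-1}$; these are equivalent.
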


\begin{proof}
By Dye's Theorem, there exists an orbit-equivalence $\Psi': A^\Z \to A^\Z$ from the shift action $\Z \cc^T (A^\Z,\mu)$ to $\Z \cc^T (A^\Z,\nu)$. By Lemma \ref{lem:kakutani} there exists $S \in [T,\mu]$ such that $\pi = \pi \Psi' S.$

Let $\Psi = \Psi' S$. Then $\Psi$ is an orbit-equivalence from $\Z \cc^T (A^\Z,\mu)$ to $\Z \cc^T (A^\Z,\nu)$ since pre-composing with an element of the full group does not change orbits. Also $\pi = \pi \Psi$.
\end{proof}

\begin{lem}\label{lem:main}
Let $\mu \in \Prob_\F(A^\F)$ be Markov. Let $t \in S$ and suppose that the symbolic restriction $\mu_{t}\in \Prob_\Z(A^\Z)$ is such that $\Z \cc^T (A^\Z,\mu_{t})$ is essentially free and ergodic. Also let $\nu \in \Prob_\Z(A^\Z)$ be an ergodic, essentially free, shift invariant measure. By abuse of notation, let $\pi:A^\F \to A$ denote the map $\pi(x)=x_e$ and let $\pi:A^\Z \to A$ denote the map $\pi(x)=x_0$. Suppose that $\pi_*\mu = \pi_*\nu$. 

Then the action $\F \cc (A^\F,\mu)$ is OE to $\F \cc (A^\F,\rho)$ where $\rho$ is a shift-invariant measure uniquely determined by the following. 
\begin{itemize}
\item $\rho_t = \nu$,
\item $\rho_s = \mu_s$ for all $s \in S \setminus \{t\}$,
\item $\rho$ is Markov along $s$ for every $s \in S\setminus \{t\}$.
\end{itemize}

\end{lem}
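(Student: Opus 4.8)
The plan is to build the orbit-equivalence by applying the general machinery of Section 3 (Lemmas \ref{lem:basic1}, \ref{lem:basic2}, \ref{lem:basic3}) to a cocycle $\omega$ that acts ``fiberwise'' along the left-cosets of the cyclic subgroup $\langle t \rangle$. Concretely, by Lemma \ref{lem:key} applied to $\mu_t$ and $\nu$ (both are ergodic, essentially free, shift-invariant on $A^\Z$ with equal time-$0$ marginals), there is an orbit-equivalence $\Psi:A^\Z \to A^\Z$ from $\Z \cc^T (A^\Z,\mu_t)$ to $\Z \cc^T (A^\Z,\nu)$ with $\pi = \pi\Psi$. This $\Psi$ is implemented by a measurable cocycle $c:\Z \times A^\Z \to \Z$ (i.e. $\Psi(T^n x) = T^{c(n,x)}\Psi(x)$, or dually a cocycle recording how $\Psi$ moves points within a $\Z$-orbit). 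I would first transfer $\Psi$ back to a modification of the shift action on $A^\F$ by declaring that the new action $\ast$ agrees with $\cdot$ on all generators $s \ne t$, and along $t$ it shifts ``through $\Psi$''. Precisely: define $\tau(t,x)$ and $\tau(t^{-1},x)$ to be the power of $t$ dictated by the orbit-equivalence $\Psi$ applied to the symbolic restriction $R^t x \in A^\Z$ along the $\langle t\rangle$-coset of $e$; the involution identity $\tau(s^{-1},\tau(s,x)\cdot x) = \tau(s,x)^{-1}$ will follow because $\Psi$ is a bijective orbit map. Then Lemma \ref{lem:basic1} produces $\ast$, $\omega$, $\Omega$; and applying Lemma \ref{lem:basic3} with the ``inverse'' cocycle coming from $\Psi^{-1}$ shows $\Omega$ is invertible with $\Omega^{-1} = \hat\Omega$, hence $\Omega$ is a measure isomorphism onto $\rho := \Omega_*\mu$.

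Second, I would verify that $\rho$ has the three listed properties. For the past-preservation hypothesis of Lemma \ref{lem:basic2}: since $\omega(g,x)$ differs from $g$ only by inserting powers of $t$ ``locally'' — more precisely $\omega(g,x)$ and $g$ have the same reduced word after deleting maximal $t$-syllables, or better, $g \in \past(s) \Leftrightarrow \omega(g,x)\in \past(s)$ for every $s \in (S\cup S^{-1})\setminus\{t^{\pm 1}\}$, because the $\Psi$-modification only affects which power of $t$ one travels to within a coset and never changes the first non-$t$ step of a reduced word. This is the analogue of Claims 3 and 4 in the proof of Lemma \ref{lem:key0}, and I expect a similar induction on word length, only cleaner here since $\tau$ is supported on $\{t,t^{-1}\}$. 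Granting past-preservation for all $s \in S\setminus\{t\}$, Lemma \ref{lem:basic2} gives that $\rho$ is $s$-Markov for every $s\in S\setminus\{t\}$. For the symbolic restrictions: for $s\ne t$, $(\Omega x)_{s^n} = x_{\omega(s^n,x)} = x_{s^n}$ exactly as in Lemma \ref{lem:key0}, so $\rho_s = \mu_s$; and for $s=t$, by construction $R^t \circ \Omega = \Psi \circ R^t$, so $\rho_t = R^t_*\Omega_*\mu = \Psi_*\mu_t = \nu$ since $\Psi$ is an isomorphism from $\mu_t$ to $\nu$.

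Third, the uniqueness of $\rho$ and its well-definedness by the three bullet conditions is exactly Lemma \ref{lem:helper0}: $\nu_s := \mu_s$ for $s\ne t$ are Markov, $\nu_t := \nu$, all have the common $1$-marginal $\pi_*\mu = \pi_*\nu$, so Lemma \ref{lem:helper0} produces the unique shift-invariant $\rho$ with $\rho_s = \nu_s$ for all $s$ that is $s$-Markov for all $s\ne t$; and the $\rho$ we built satisfies these, hence equals it.

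I expect the main obstacle to be the careful construction of $\tau(t,x)$, $\tau(t^{-1},x)$ from $\Psi$ and the verification that it satisfies the hypotheses of Lemmas \ref{lem:basic1} and \ref{lem:basic3} (the ``inverse'' compatibility $\omega(\homega(s,\Omega x),x)=s$), together with the bookkeeping in the past-preservation claims. The subtle point is that $\omega(t,x)$ is a \emph{variable} power of $t$ depending on $x$ (not just one of finitely many elements as in Lemma \ref{lem:key0}), so the cocycle $\omega(g,x)$ can involve arbitrarily long $t$-syllables; one must check the induction still goes through, i.e. that travelling along a reduced word $g$ and re-expressing via $\omega$ never ``escapes'' the $\past(s)$ / non-$\past(s)$ dichotomy for $s\ne t^{\pm1}$. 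Concretely I would package $\Psi$ via the $\Z$-valued cocycle $c$ with $c(m+n,x) = c(m, T^n x) + n$ reindexed appropriately, set $\tau(t,x) = t^{\,c(1, R^t x)\cdot(\text{sign adjustments})}$ in a way that makes the involution identity automatic, and then the equivariance $R^t\circ\Omega = \Psi\circ R^t$ falls out of the cocycle identity. Once that is in place everything else parallels the already-proven Lemmas \ref{lem:key0} and \ref{lem:basic2}--\ref{lem:basic3} and \ref{lem:helper0}.
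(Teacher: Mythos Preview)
Your proposal is correct and follows essentially the same route as the paper's proof: obtain $\Psi$ from Lemma~\ref{lem:key}, transport it to a cocycle $\tau$ on $\F$ supported on $\{t,t^{-1}\}$ (the paper does this via $\tilde T=\Psi^{-1}T\Psi$ and the cocycle $\beta$ with $T^{\beta(n,x)}x=\tilde T^n x$, setting $\tau(t^{\pm1},x)=t^{\beta(\pm1,R^tx)}$), invoke Lemmas~\ref{lem:basic1} and~\ref{lem:basic3} to build and invert $\Omega$, verify past-preservation for $s\ne t^{\pm1}$ by induction on word length, and conclude via Lemmas~\ref{lem:basic2} and~\ref{lem:helper0}. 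Two small points worth tightening: (i) the ``same orbits'' claim (that $\F\cdot x=\F\ast x$) is not automatic from invertibility of $\Omega$ alone, but it does follow immediately from the identity $\omega(\homega(g,\Omega x),x)=g$ in Lemma~\ref{lem:basic3}, so you should state this explicitly; (ii) the paper needs, in addition to $g\in\past(s)\Rightarrow\omega(g,x)\in\past(s)$ for $s\notin\{t,t^{-1}\}$, the companion fact that $g\in\past(t)\cup\past(t^{-1})\Rightarrow\omega(g,x)\in\past(t)\cup\past(t^{-1})$, since these together give the biconditional required by Lemma~\ref{lem:basic2} via the disjoint decomposition $\F=\{e\}\sqcup\bigsqcup_{s}\past(s)$.
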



\begin{proof}

 By Lemma \ref{lem:key} there exists an orbit equivalence 
$$\Psi:A^\Z \to A^\Z$$
from the shift action  $\Z\cc^T (A^\Z, \mu_{t})$ to $\Z\cc^T (A^\Z, \nu)$ such that $\pi = \pi \Psi.$

Define
\begin{eqnarray*}
\tT:A^\Z \to A^\Z &\textrm{ by } & \tT = \Psi^{-1} T \Psi \\
\a: \Z \times A^\Z \to \Z &\textrm{ by } & \tT^{\a(n,x)} x = T^n x\\
\b: \Z \times A^\Z \to \Z &\textrm{ by } & T^{\b(n,x)} x = \tT^n x
\end{eqnarray*}
 Because $\Psi$ is an OE, $\a,\b$ are well-defined, satisfy the cocycle equations below and the inverse equation:
\begin{eqnarray}
\a(n+m,x) &=& \a(n, T^m x) + \a(m,x) \label{eqn:b1} \\
\b(n+m,x) &=& \b(n, \tT^m x) + \b(m,x) \label{eqn:b2} \\
\b(\a(n,x),x) &=& \a(\b(n,x),x) = n \label{eqn:b3}
\end{eqnarray}
for all $x\in A^\Z, n,m  \in \Z$. To ease notation, let $R=R^t: A^\F \to A^\Z$ denote the restriction map as in Definition \ref{defn:r}. Define $\tau:(S\cup S^{-1}) \times A^\F \to \F$ by
$$\tau(s,x) = s \textrm{ for } s\in S \cup S^{-1}\setminus \{t,t^{-1}\},$$
$$\tau(t^n,x) = t^{\b(n,Rx)} \textrm{ for } n \in \{-1,+1\}.$$
Then
\begin{eqnarray*}
\tau(t^{-1}, \tau(t,x)\cdot x) &=& \tau(t^{-1}, t^{\b(1,Rx)} \cdot x) = t^{\b(-1, R( t^{\b(1,Rx)} \cdot x) )} \\
&=& t^{\b(-1, T ^{\b(1,Rx)}Rx )} =  t^{\b(-1, \tT Rx) }  = t^{- \b(1,Rx)} = \tau(t,x)^{-1}
\end{eqnarray*}
where the second-to-last equality follows from the cocycle equation for $\b$. Similarly, $\tau(t, \tau(t^{-1},x)\cdot x) = \tau(t^{-1},x)^{-1}$. So 
 $\tau$ satisfies the conditions of Lemma \ref{lem:basic1}.  Let $\ast, \omega, \Omega$ be as in Lemma \ref{lem:basic1}. 

We claim that
\begin{eqnarray}\label{eqn:o4}
t^n \ast x = t^{\b(n,Rx)} \cdot x  \textrm{ for } n \in \Z.
\end{eqnarray}
By definition this statement is true if $n\in \{-1,1\}$. So it suffices to prove that if $n,m \in \Z$ satisfy (\ref{eqn:o4}) then $n+m$ also satisfies (\ref{eqn:o4}). We claim that $R(t^m\ast x) = \tT^m Rx$. This follows from:
\begin{eqnarray*}
R(t^m\ast x)_n &=& (t^m \ast  x)_{t^n} = (t^{\b(m,Rx)}\cdot x)_{t^n} = x_{t^{n + \b(m,Rx)}} \\
&=& (Rx)_{n + \b(m,Rx)} = (T^{\b(m,Rx)}Rx)_n = (\tT^mRx)_n.
\end{eqnarray*}
Therefore, 
\begin{eqnarray*}
t^{n+m}\ast x &=& t^n\ast (t^m\ast  x) = t^{\b(n, R t^m\ast x)} \cdot (t^m \ast x) = t^{\b(n, Rt^m\ast x)} \cdot (t^{\b(m,Rx)} \cdot x) \\
&=& t^{\b(n, Rt^m\ast x) + \b(m,Rx)} \cdot x =   t^{\b(n, \tT^mR x) + \b(m,Rx)} \cdot x =  t^{\b(n+m, R x)} \cdot x.
\end{eqnarray*}
This proves (\ref{eqn:o4}).

To finish the lemma, we will show that $\ast$ has the same orbits as $\cdot$ (modulo $\mu$ null sets) and afterwards that $\F \cc^{\ast} (A^\F,\mu)$ is measure-conjugate to $\F \cc(A^\F,\rho)$.

\noindent {\bf Claim 1}. $\F \cc^\ast (A^\F,\mu)$ has the same orbits  as $\F \cc(A^\F,\mu)$.

\begin{proof}

Let $\k:\F \times A^\F \to \F$ be the unique function satisfying the following:
\begin{eqnarray*}
\k(s,x) & = & s \textrm{ for } s\in S\cup S^{-1} -\{t, t^{-1}\} \\
\k(t^n,x) &=& t^{\a(n,Rx)} \textrm{ for } n\in \Z \\
\k(gh,x) &=& \k(g, h \cdot x) \k(h,x).
\end{eqnarray*}
This is well-defined because $\a$ satisfies the cocycle equation and $R(t^n\cdot x)=T^nRx$. The following inverse equation also holds: 
\begin{eqnarray}\label{eqn:inverse}
\k(\omega(h,x),x) = \omega(\k(h,x),x) = h.
\end{eqnarray}
To see this, first note that it is obvious when $h \in S \cup S^{-1} \setminus \{t,t^{-1}\}$. Then observe that the inverse equations (\ref{eqn:b3}) imply the statement for $h$ in the subgroup generated by $t$.  If the statement holds for elements $h_1,h_2$ then it must hold for their product because of:
\begin{eqnarray*}
\k(\omega(h_1h_2,x),x) &=&\k(\omega(h_1, h_2 \ast x) \omega(h_2, x),x) = \k(\omega(h_1, h_2\ast x), \omega(h_2,x) \cdot x) \k(\omega(h_2,x),x) \\
&=& \k(\omega(h_1, h_2\ast x), h_2 \ast x) \k(\omega(h_2,x),x) = h_1h_2
\end{eqnarray*}
and the related equation with the orders of $\omega,\k$ reversed. By induction (\ref{eqn:inverse}) is true for all $h \in \F$. Equation (\ref{eqn:inverse}) implies that the actions $\ast$ and $\cdot$ have the same orbits (modulo $\mu$ null sets). 
\end{proof}

\noindent {\bf Claim 2}. $\Omega$ is a measure-conjugacy  between $\F \cc^\ast (A^\F,\mu)$ and $\F \cc(A^\F,\Omega_*\mu)$. 

\begin{proof}
By Lemma \ref{lem:basic1}, $\Omega$ is $(\ast,\cdot)$-equivariant.  In order to show that $\Omega$ is invertible, define
\begin{eqnarray*}
\hT:A^\Z \to A^\Z &\textrm{ by } &  \hT = \Psi T \Psi^{-1} \\
\hat{\b}: \Z \times A^\Z \to \Z &\textrm{ by } & T^{\hat{\b}(n,x)} x = \hT^n x.
\end{eqnarray*}
Because $\Psi$ is an OE, $\hat{\b}$ satisfies the cocycle equation
\begin{eqnarray}\label{eqn:b4}
\hat{\b}(n+m,x) = \hat{\b}(n, \hT^m x) + \hat{\b}(m,x).
\end{eqnarray}
It also satisfies the inverse equations:
\begin{eqnarray}\label{eqn:b5}
\hat{\b}(\b(n, \Psi^{-1}x),x) & = & \b(\hat{\b}(n, \Psi x), x) = n.
\end{eqnarray}
To see this observe that
\begin{eqnarray*}
T^{\hat{\b}(\b(n, \Psi^{-1}x),x)}x & = &  \hT^{\b(n,\Psi^{-1}x)} x = \Psi T^{\b(n,\Psi^{-1}x)} \Psi^{-1}x \\
&=& \Psi \tT^n \Psi^{-1} x = T^nx.
\end{eqnarray*}
This shows $\hat{\b}(\b(n, \Psi^{-1}x),x) = n$. The other equality is similar.

Define $\hat{\tau}:(S\cup S^{-1})\times A^\F \to \F$ by
$$\hat{\tau}(s,x) = s  \textrm{ for } s\in S \cup S^{-1}\setminus \{t,t^{-1}\},$$
$$\hat{\tau}(t^n,x) = t^{\hat{\b}(n,Rx)}   \textrm{ for } n \in \{-1,1\}.$$
As in the case of $\tau$, $\hat{\tau}$ satisfies the hypotheses of Lemma \ref{lem:basic1}. Let $\star,\homega,\hat{\Omega}$ be the action, cocycle and map defined by Lemma \ref{lem:basic1}. 

The following restriction equations hold:
\begin{eqnarray}
R \hat{\Omega}  = \Psi^{-1}R, \quad  R \Omega  = \Psi R.  \label{R1}
\end{eqnarray}
The first equation above is proven by:
\begin{eqnarray*}
(R \hat{\Omega} x)_n &=& (\hat{\Omega}x)_{t^n} = x_{\homega(t^n,x)} = x_{t^{\hat{\b}(n,Rx)}} \\
&=& (T^{\hat{\b}(n,Rx)}Rx)_0 =  ( \hT^n Rx)_0 = ( \Psi T^n \Psi^{-1} Rx)_0 = \pi(  \Psi T^n \Psi^{-1} Rx) \\
&=& \pi( T^n \Psi^{-1} Rx) = ( \Psi^{-1}Rx)_n.
\end{eqnarray*}
The second equation is similar. We now claim the following inverse equations:
\begin{eqnarray}\label{injective}
\homega(\omega(g,\hat{\Omega}x),x) = \omega(\homega(g,\Omega x),x) = g.
\end{eqnarray}
This is immediate if $g \in S\cup S^{-1} \setminus \{t,t^{-1}\}$. The case $g=t^n$ follows from (\ref{R1}) and (\ref{eqn:b5}):
\begin{eqnarray*}
\homega(\omega(t^n,\hat{\Omega}x),x) = \homega(t^{\b(n,R\hat{\Omega} x)},x) = \homega(t^{\b(n,\Psi^{-1} R x)},x) = t^{\hat{\b}(\b(n,\Psi^{-1}Rx),Rx)} = t^n.
\end{eqnarray*}
The general case follows from Lemma \ref{lem:basic3} which also shows $\hat{\Omega}\Omega x= \Omega \hat{\Omega}x = x$. Therefore, $\Omega$ is invertible with inverse equal to $\hat{\Omega}$.  
\end{proof}

The measure $\rho$ is well-defined by Lemma \ref{lem:helper0}. It now suffices to show $\Omega_*\mu=\rho$. This is obtained by verifying that $\Omega_*\mu$ satisfies the same conditions defining $\rho$.

\noindent {\bf Claim 3}. For every $s\in S$, $(\Omega_*\mu)_s = \rho_s.$

\begin{proof}
By (\ref{R1}),
$$(\Omega_*\mu)_t = (R\Omega)_*\mu = (\Psi R)_*\mu = \Psi_* \mu_t = \nu=\rho_t.$$

Fix $s\in S\setminus \{t\}$. Then  $\omega(s^n,x)=s^n$ for all $n$. We claim that $R^s\Omega = R^s$. This follows from
$$(R^s\Omega x)_n = (\Omega x)_{s^n} = x_{\omega(s^n,x)} = x_{s^n} = (R^sx)_n.$$
So
$$(\Omega_*\mu)_s = (R^s \Omega)_*\mu = R^s_*\mu = \mu_s=\rho_s.$$
\end{proof}


It now suffices to show that $\Omega_*\mu$ is $s$-Markov for every $s\in S\setminus \{t\}$. We will use Lemma \ref{lem:basic2} and the next two claims.

\noindent {\bf Claim 4}. For any $s\in S  \cup S^{-1} \setminus \{t,t^{-1}\}$, 
$$g \in \past(s) \Rightarrow \omega(g,x) \in \past(s) ~\forall x\in A^\F.$$

\begin{proof}

The proof of the Claim is by induction on $|g|$. If $|g|\le 1$ then $g=s$ and $\omega(g,x)=s \in \past(s)$. So assume $|g|>1$. Then $g=hk$ for some $h \in S \cup S^{-1}$ and $k\in \past(s)$ with $|k|< |g|$. So
$$\omega(g,x)=\omega(hk,x) = \omega(h, k \ast x)\omega(k,x).$$
By induction, we may assume $\omega(k,x) \in \past(s)$. To obtain a contradiction, suppose $\omega(g,x) \notin \past(s)$.

 If $h \in S \cup S^{-1} \setminus \{t,t^{-1}\}$ then $\omega(h, k \ast x) = h$ has length 1. If $f \in \F$ is any element with length $1$ then $f\past(s) \subset \{e\} \cup \past(s)$. So if  $\omega(g,x)= \omega(h, k \ast x)\omega(k,x) \notin \past(s)$ then $\omega(g,x)=e$. But $\omega$ is injective by (\ref{injective}). This implies $g=e$ contradicting that $g \in \past(s)$.  
 
 On the other hand, if $h=t^m$ for some $m$ then $\omega(h,k\ast x) = t^n$ for some $n$. Since $t^n \past (s) \subset \past(s)$ (since $s\notin \{t,t^{-1}\}$), this shows $\omega(g,x) \in \past(s)$.
\end{proof}

\noindent {\bf Claim 5}. If $g \in \past(t) \cup \past(t^{-1})$ then $\omega(g,x) \in \past(t) \cup \past(t^{-1})$   for $\mu$-a.e. $x\in A^\F.$

\begin{proof}
 If $|g|\le 1$ then $g \in \{t,t^{-1}\}$ and 
$$\omega(g,x)=t^n \in \past(t) \cup \past(t^{-1})$$
for some $n$ by (\ref{eqn:o4}) and the definition of $\omega$ from Lemma \ref{lem:basic1}. We are using here that $\omega$ is injective by (\ref{injective}) and therefore $n\ne 0$. 

So we may assume $|g|>1$. Then $g=hk$ for some $h \in S \cup S^{-1}$ and $k \in \past(t)\cup \past(t^{-1})$ with $|k|<|g|$. So
$$\omega(g,x)=\omega(hk,x) = \omega(h, k \ast x)\omega(k,x).$$
By induction we may assume $\omega(k,x) \in \past(t) \cup \past(t^{-1})$. To obtain a contradiction, assume $\omega(g,x) \notin \past(t) \cup\past(t^{-1})$. 

If $h \in S \cup S^{-1} \setminus \{t,t^{-1}\}$ then $\omega(h,k\ast x)=h$. Since
$$(S \cup S^{-1} \setminus \{t,t^{-1}\})[\past(t) \cup \past(t^{-1})] \subset \past(t) \cup \past(t^{-1}),$$
 this shows $\omega(g,x) \in \past(t)\cup \past(t^{-1})$. 

So assume $h = t^n$ for some $n \in \{-1,+1\}$. Then $\omega(h,k\ast x)=t^m$ for some $m$. Since 
$$t^m [ \past(t) \cup \past(t^{-1}) ] \subset  \past(t) \cup \past(t^{-1}) \cup \{e\}$$
it follows that $\omega(k, x) = t^{-m}$ so $\omega(g,x)=e$. But this implies $g=e$ since $\omega$ is injective (\ref{injective}), a contradiction. 
\end{proof}

Lemma \ref{lem:basic2} implies $\Omega_*\mu$ is $s$-Markov for every $s\in S\setminus \{t\}$. So $\Omega_*\mu$ satisfies the same defining properties as $\rho$. Thus $\Omega_*\mu=\rho$.

\end{proof}

\begin{proof}[Proof of Theorem \ref{thm:main}]
Let $\mu \in \Prob_\F(A^\F)$ be Markov and properly ergodic. Since all Bernoulli shifts over $\F$ are OE (by \cite{MR2763777}) it suffices to show that $\F\cc (A^\F,\mu)$ is OE to a Bernoulli shift. By Proposition \ref{prop:reduction} we may assume that $\mu_s$ is essentially free and ergodic for every $s\in S$. 

Let $S=\{s_1,\ldots,s_r\}$. Let $\nu$ be the Bernoulli product measure $\nu = (\pi_*\mu)^\Z \in \Prob_\Z(A^\Z)$. Define shift-invariant measures $\mu^{(0)},\mu^{(1)},\ldots, \mu^{(r)} \in \Prob_\F(A^\F)$ as follows. First, $\mu^{(0)}=\mu$. For $i>0$, $\mu^{(i)}$ is characterized by:
\begin{itemize}
\item $\mu^{(i)}_{s_j} = \nu$ for all $j\le i$
\item $\mu^{(i)}_{s_j} = \mu_{s_k}$ for all $i<j\le r$,
\item $\mu^{(i)}$ is $s_j$-Markov for all $j \ne i$.
\end{itemize}
By Lemma \ref{lem:helper0}, $\mu^{(i)}$ is Markov. By Lemma \ref{lem:main},  $\F \cc (A^\F,\mu^{(i)})$ is OE to $\F \cc (A^\F,\mu^{(i+1)})$ for all $i<r$. Since $\mu^{(r)}=(\pi_*\mu)^\F$ is Bernoulli this completes the proof.
\end{proof}


\bibliography{biblio}
\bibliographystyle{alpha}

\end{document}